\newtheorem{theorem}{Theorem}[section]
\newtheorem{acknowledgement}[theorem]{Acknowledgement}
\newtheorem{corollary}[theorem]{Corollary}
\newtheorem{example}[theorem]{Example}
\newtheorem{lemma}[theorem]{Lemma}
\newtheorem{proposition}[theorem]{Proposition}
\newtheorem{remark}[theorem]{Remark}
\begin{document}
\title{On perspective Abelian groups }
\author{Grigore C\u{a}lug\u{a}reanu, Andrey Chekhlov}
\address{Department of Mathematics, Babe\c{s}-Bolyai University,
Cluj-Napoca, 400084, Romania}
\email{calu@math.ubbcluj.ro}
\address{Department of Mathematics, Faculty of Mechanics and Mathematics,
Tomsk State University, Tomsk, Russia}
\email{cheklov@math.tsu.ru}

\begin{abstract}
As a special case of perspective $R$-modules, an Abelian goup is called 
\textsl{perspective} if isomorphic summands have a common complement. In
this paper we describe many classes of such groups.
\end{abstract}

\subjclass[2020]{Primary: 20K10, 20K15, 20K27. Secondary: 20K21, 16D99.}
\keywords{Abelian group, perspective module, primary group, torsion-free
group}
\maketitle

\section{Introduction}

This paper concerns about direct summands of Abelian groups. To simplify the
writing, $G$ will denote an (arbitrary) Abelian group and by some different
letters we denote direct summands of $G$. In what follows, since \emph{all
summands we consider are direct}, we remove this adjective. Moreover, the
word "complement" will be used only for direct complements. In trying to
make a notation difference, by $\mathbb{Z}(m)$ we denote the Abelian group
and by $\mathbb{Z}_{m}$ we denote the ring of integers modulo $m$. For an
Abelian group $G$, by $\mathrm{End}(G)$ we simply denote $\mathrm{End}_{%
\mathbb{Z}}(G)$, that is, the endomorphism ring of $G$.

We start with the following general

\textbf{Definition} (see \cite{goo}). Let $L$ be a bounded lattice. Two
elements $x,y\in L$ are said to be \textsl{perspective} (in $L$) provided
they have a common (direct) complement, i.e., an element $z\in L$ such that $%
x\vee z=y\vee z=1$, $x\wedge z=y\wedge z=0$. This definition comes back to
John von Neumann.

Specializing for the submodule lattice of a module, two summands $A,B$ of a
module $M$ will be denoted by $A\sim B$, if they have a common complement,
i.e., there exists a submodule $C$ such that $M=A\oplus C=B\oplus C$. It is
clear that $A\sim B$ implies $A\cong B$. A module $M$ is called \textsl{%
perspective} when $A\cong B$ implies $A\sim B$ for any two summands $A,B$ of 
$M$.

A module $_{R}M$ over a ring $R$ is said to satisfy \textsl{internal
cancellation} (or we say $M$ is internally cancellable; \textsl{IC}, for
short) if, whenever $M=K\oplus N=K^{\prime }\oplus N^{\prime }$ (in the
category of $R$-modules), $N\cong N^{\prime }\Rightarrow K\cong K^{\prime }$
[or $M/N\cong M/N^{\prime }$].

It is clear that \emph{perspective modules satisfy the internal cancellation
property }in the sense that complements of isomorphic summands are
isomorphic (see \cite{khu}).

The modules definition can be restricted to rings as follows

\textbf{Definition} A ring $R$ is called \textsl{perspective} if isomorphic
direct summands of $_{R}R$ have a common (direct) complement.

This property for rings turns out to be left--right symmetric, that is, $%
_{R}R$ is perspective if and only if $R_{R}$ is perspective for any ring $R$
and we call such ring a perspective ring.

In this paper, we characterize some large classes of perspective Abelian
groups. In the sequel, the word "group" will always mean an "Abelian group".

Our main results are the following.

1) For large classes of Abelian groups, we show that the perspectivity is
equivalent to finite rank. These are: the divisible primary groups, the
divisible torsion-free groups, the homogeneous completely decomposable
groups with type $\mathbf{t}(\mathbb{Q}_{p})$.

2) The only perspective rank 1 torsion-free groups are the rational groups.
In particular, the only free perspective group is $\mathbb{Z}$.

3) Any finite $p$-group is perspective. Any bounded perspective $p$-group is
finite.

4) A torsion group is perspective iff so are all its primary components.

5) (a) Let $G=T(G)\oplus F$ where $T(G)$ is the torsion part of $G$ and $F$
is torsion-free. Then $G$ is perspective iff $T(G)$ and $F$ are perspective

(b) Let $G=D(G)\oplus R$ where $D(G)$ is the divisible part of $G$ and $R$
is reduced. Then $G$ is perspective iff $D(G)$ and $R$ are perspective.

6) We describe the perspective torsion-free algebraically compact groups.

7) For any rational group $H$ (i.e., rank 1 torsion-free group) we
characterize when $H\oplus H$ is perspective.

Section 2 recalls a few general results on perspective modules from \cite%
{gar}, which are used in the sequel. Section 3 contains our results on
perspective Abelian groups, divided into subsection 3.1, about definitions,
subsection 3.2, about reduction theorems, subsection 3.3, about perspective
torsion groups and subsection 3.4, about perspective torsion-free groups.
Some examples are given in the end.

\section{Generalities on perspective modules}

\textbf{Definition 8.1} \cite{lam1}. Let $\mathcal{P}$ be a module-theoretic
property. We say that $\mathcal{P}$ is an \textsl{endomorphism ring property}
(or \textbf{ER}-property for short) if, whenever $A_{R}$ and $A_{S}^{\prime
} $ are right modules (over possibly different rings $R$ and $S$) such that $%
\mathrm{End}(A_{R})\cong \mathrm{End}(A_{S}^{\prime })$ (as rings), $A_{R}$
satisfies $\mathcal{P}$ implies that $A_{S}^{\prime }$ does (and then, of
course, also conversely).

As an exhaustive reference on perspective $R$-modules we mention \cite{gar}.
However, \emph{not much} will be used when describing the perspective
Abelian groups.

For further reference we list here from \cite{gar}.

\begin{theorem}
\label{3.4}For a module $M_{S}$ with $R=\mathrm{End}_{S}(M)$, the following
conditions are equivalent:

(1) $M$ is perspective.

(4) If $erse=e$ for some $e^{2}=e,r,s\in R$, then $erte=e$ for some $t\in R$
such that $ete\in U(eRe)$.

In particular, $M_{S}$ is perspective iff $R_{R}$ is perspective, i.e.,
perspectivity is an \textbf{ER}-property.
\end{theorem}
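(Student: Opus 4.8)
The plan is to pass from the module $M$ to its endomorphism ring $R=\mathrm{End}_S(M)$ through the standard dictionary between summands and idempotents, and to read off the lattice condition ``isomorphic summands have a common complement'' as the purely ring-theoretic statement (4); the two supplementary assertions, about $R_R$ and about the \textbf{ER}-property, then follow formally.

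First I would record the dictionary. Summands of $M$ correspond to idempotents of $R$, a summand being $eM$ for $e^{2}=e$, and two summands $eM,fM$ are isomorphic as $S$-modules exactly when $e$ and $f$ are isomorphic idempotents, i.e. there are $a\in eRf$, $b\in fRe$ with $ab=e$ and $ba=f$. The observation linking this to (4) is that the hypothesis $erse=e$ is a bookkeeping form of ``$e$ is isomorphic to an idempotent $f$'': given $erse=e$ one checks that $f:=ser$ is idempotent and that $a:=er\in eRf$, $b:=se\in fRe$ satisfy $ab=e$, $ba=f$, so $eM\cong fM$; conversely every idempotent isomorphic to $e$ arises as $ser$ for suitable $r,s$. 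Thus the hypothesis of (4) ranges exactly over all pairs $(e,f)$ of isomorphic summands.

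The heart of the argument is to show that, for such a pair $e,f$, a common complement exists iff the conclusion of (4) holds. I would use the criterion that $eM$ and $fM$ have a common complement iff there is an automorphism $\phi\in U(R)$ with $\phi(eM)=fM$ that fixes a complement of $eM$ pointwise: given such $\phi$, that fixed complement is common, and conversely $\phi$ is built as (an isomorphism $eM\to fM$) $\oplus\,\mathrm{id}$ on a decomposition $eM\oplus C$. In ring terms, a right inverse $te$ of $er$ (so that $erte=e$) determines a section of the retraction $er$ whose interaction with the projection $e$ is recorded by $ete\in eRe$; the invertibility $ete\in U(eRe)$ is precisely what promotes this data to such an automorphism $\phi$, hence to a common complement, and conversely a common complement yields a $t$ with $erte=e$ and $ete\in U(eRe)$. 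Verifying this passage in both directions, over all admissible $e,r,s$, is the technical core, and the step I expect to be the main obstacle: one must check that the unit in the corner ring $eRe$ genuinely assembles into an automorphism of all of $M$ behaving as required, and conversely extract the element $t$ and the unit from an arbitrary common complement.

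Finally, the supplementary claims are formal consequences, since condition (4) is intrinsic to the ring $R$ alone. Applying the equivalence (1)$\Leftrightarrow$(4) to the right regular module $R_R$, whose endomorphism ring is $R$ acting by left multiplication (so $\mathrm{End}(R_R)\cong R$), shows that $R_R$ is perspective iff (4) holds in $R$; the same equivalence applied to $M_S$ shows that $M_S$ is perspective iff (4) holds in $R$. Hence $M_S$ is perspective iff $R_R$ is perspective. Moreover, if $\mathrm{End}(A_R)\cong\mathrm{End}(A'_{S})$ as rings, then (4) holds in one endomorphism ring iff it holds in the other, so $A_R$ is perspective iff $A'_{S}$ is; that is, perspectivity is an \textbf{ER}-property.
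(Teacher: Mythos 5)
A preliminary remark: the paper itself contains no proof of this theorem; it is quoted verbatim (with the original numbering of conditions) from \cite{gar}, so your attempt has to be measured against the argument given there. Your dictionary paragraph is correct and does match the standard setup: $f:=ser$ is idempotent, and $a:=er\in eRf$, $b:=se\in fRe$ satisfy $ab=e$, $ba=f$, so the hypothesis of (4) ranges exactly over pairs of isomorphic idempotents; likewise your closing paragraph is fine, since condition (4) is intrinsic to $R$, $\mathrm{End}(R_R)\cong R$ via left multiplications, and invariance under ring isomorphism gives the \textbf{ER}-statement.

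The gap sits precisely in what you call the technical core, and it is not just an unverified computation: you have misidentified the geometric content of the conclusion of (4). Given $erse=e$ and $f=ser$, an element $t$ with $erte=e$ makes $te|_{eM}$ a section of the split epimorphism $er\colon M\to eM$, so $N:=teM$ complements $\ker(er\,\cdot)=(1-f)M$; and $ete\in U(eRe)$ says exactly that the projection $e\,\cdot$ carries $N$ isomorphically onto $eM$, i.e.\ that $N$ also complements $(1-e)M$. Thus the conclusion of (4) is equivalent to the existence of a common complement for the pair $(1-e)M$, $(1-f)M$ --- the \emph{complements} --- and not for $eM$ and $fM$, as your plan asserts. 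The pairwise equivalence you propose to verify is therefore the wrong statement: your automorphism criterion (correct in itself) cannot be fed by the unit $ete$, because that unit lives on the complementary side. Both implications in fact need an internal-cancellation bridge absent from your sketch. For $(1)\Rightarrow(4)$: perspectivity implies IC, whence $(1-e)M\cong(1-f)M$, and perspectivity applied to \emph{this} pair yields a common complement $N$, from which $t$ is built as above. For $(4)\Rightarrow(1)$: one first extracts IC from (4) (a common complement of $(1-e)M$ and $(1-f)M$ forces them isomorphic), and only then, given isomorphic summands $eM\cong gM$, applies (4) to the complementary idempotents $1-e$, $1-g$ --- which is legitimate only because IC supplies the isomorphism $(1-e)M\cong(1-g)M$ needed to write $1-g=s'(1-e)r'$ with $(1-e)r's'(1-e)=1-e$. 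Without this complementation twist and the IC bootstrap, the step you yourself flag as the main obstacle would indeed fail.
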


We mention here a useful consequence (not recorded in \cite{gar}) of the
previous theorem.

\begin{corollary}
\label{prod}Arbitrary products of perspective rings are perspective.
\end{corollary}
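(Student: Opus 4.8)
The plan is to use the endomorphism-ring characterization from Theorem~\ref{3.4}: a module $M_S$ is perspective iff its endomorphism ring $R=\mathrm{End}_S(M)$ is perspective as a ring, and perspectivity of a ring is captured by the purely ring-theoretic condition (4). Since we want to show that a product $\prod_{i\in I}R_i$ of perspective rings is perspective, I would work directly with condition (4) inside the product ring $R=\prod_{i\in I}R_i$, exploiting the fact that all of the relevant operations---multiplication, taking idempotents, and the unit group---are computed componentwise.

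First I would recall that an element $r=(r_i)_{i\in I}\in R$ is idempotent iff each $r_i$ is idempotent in $R_i$, and that $r\in U(R)$ iff each $r_i\in U(R_i)$; likewise $r=(r_i)$ lies in $U(eRe)$ (for a fixed idempotent $e=(e_i)$) iff each $r_i\in U(e_iR_ie_i)$, because $eRe=\prod_i e_iR_ie_i$ as a ring with identity $e$. Next I would take an idempotent $e=(e_i)\in R$ and elements $r=(r_i),s=(s_i)\in R$ satisfying the hypothesis $erse=e$ of condition~(4). Reading this equation componentwise gives $e_ir_is_ie_i=e_i$ for every $i\in I$. Applying the assumption that each $R_i$ is perspective, condition~(4) for $R_i$ yields, for each $i$, an element $t_i\in R_i$ with $e_ir_it_ie_i=e_i$ and $e_it_ie_i\in U(e_iR_ie_i)$.

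Then I would simply assemble these into $t:=(t_i)_{i\in I}\in R$. Componentwise, $erte=(e_ir_it_ie_i)_i=(e_i)_i=e$, and $ete=(e_it_ie_i)_i$, each coordinate of which is a unit of $e_iR_ie_i$; hence $ete\in\prod_i U(e_iR_ie_i)=U(eRe)$. This verifies condition~(4) for $R$, so $R=\prod_i R_i$ is perspective. I do not expect a genuine obstacle here: the whole argument is a componentwise reassembly, and the only points requiring care are the elementary but essential facts that idempotents, units, and the corner ring $eRe$ in a product are all formed coordinatewise, together with the use of the axiom of choice to pick the family $(t_i)$ simultaneously. It is worth noting that the argument uses only the one-directional implication "each $R_i$ perspective $\Rightarrow R$ perspective" via condition~(4), and the translation between modules and rings is handled entirely by Theorem~\ref{3.4}.
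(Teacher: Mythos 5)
Your proposal is correct and follows essentially the same route as the paper's own proof: both verify condition (4) of Theorem~\ref{3.4} componentwise in $R=\prod_{i\in I}R_i$, extracting $t_i$ from perspectivity of each $R_i$ and assembling $t=(t_i)$ so that $erte=e$ and $ete\in U(eRe)$. Your additional remarks---that $eRe=\prod_i e_iR_ie_i$, that units in a product are formed coordinatewise, and the implicit use of choice to select the family $(t_i)$---merely make explicit the facts the paper's terser proof takes for granted.
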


\begin{proof}
Suppose $R$ is direct product of $R_{i}$, $i\in I$ and $erse=e$ for some $%
e^{2}=e,r,s\in R$. As $e=(e_{i})$, $r=(r_{i})$, $s=(s_{i})$ so $erse=e$ iff $%
e_{i}r_{i}s_{i}e_{i}=e_{i}$ for each $i$. As each $R_{i}$ is perspective, so
there exists $t_{i}$ such that $e_{i}r_{i}t_{i}e_{i}=e_{i}$ and $%
e_{i}t_{i}e_{i}\in U(e_{i}R_{i}e_{i})$. If $t=(t_{i})$, then $erte=e$ and $%
ete\in U(eRe)$.
\end{proof}

\begin{proposition}
\label{5.4}Any direct summand of a perspective module is perspective.
\end{proposition}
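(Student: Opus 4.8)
The plan is to reduce the perspectivity of a summand $N$ of a perspective module $M$ to perspectivity inside $M$, and then to cut the common complement down to $N$ by intersecting with it. First I would fix a complement, writing $M=N\oplus K$, and take two summands $A,B$ of $N$ with $A\cong B$; the goal is to produce a single common complement for $A$ and $B$ inside $N$.

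The first step is to observe that the summand relation is transitive: if $N=A\oplus A'$, then $M=A\oplus(A'\oplus K)$, so $A$ is a summand of $M$, and likewise $B$. Since $M$ is perspective and $A\cong B$, there is a submodule $D$ with $M=A\oplus D=B\oplus D$. This $D$ is a common complement in $M$, not in $N$, so the remaining work is to descend to $N$.

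The key step is to set $C=N\cap D$ and verify it is a common complement of $A$ and $B$ inside $N$. Because $A\subseteq N$ and $A+D=M$, the modular law gives $N=N\cap M=A+(N\cap D)=A+C$, and the sum is direct since $A\cap C\subseteq A\cap D=0$; hence $N=A\oplus C$. The identical argument with $B\subseteq N$ and $B+D=M$ yields $N=B\oplus C$. Thus the one submodule $C$ is a complement of both $A$ and $B$ in $N$, so $A\sim B$ in $N$ and $N$ is perspective.

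The argument is short, and the only place requiring care is the application of the modular law: one must use that $A$ (respectively $B$) is contained in $N$, so that $N\cap(A+D)=A+(N\cap D)$, and simultaneously that $A\cap D=0$ to secure directness. I would double-check that the single module $C=N\cap D$ serves both $A$ and $B$ at once, which it does precisely because $D$ was chosen as a \emph{common} complement in $M$. Alternatively, one could argue ring-theoretically: by Theorem \ref{3.4} it suffices to show that a corner ring $eRe$ of a perspective ring $R$ is perspective, verifying condition (4) for $eRe$ by pulling a witness $t$ back from $R$ and replacing it by $ete$; but the lattice argument above is the more direct route.
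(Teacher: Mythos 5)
Your proof is correct and is essentially the paper's own argument: the paper states Proposition~\ref{5.4} without proof (citing \cite{gar}), but its proof of the group-theoretic analogue, Proposition~\ref{dir}, uses exactly your route --- pass from summands of $N$ to summands of $M$, obtain a common complement $D$ in the perspective module $M$, and cut it down via the modular law to $C=N\cap D$, checking directness from $A\cap C\subseteq A\cap D=0$. Nothing to add; your care with the hypotheses $A\subseteq N$ and $B\subseteq N$ in applying modularity matches the paper's treatment.
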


\begin{corollary}
\label{5.10}If $M$ and $N$ are perspective $R$-modules with $\mathrm{Hom}%
_{R}(M,N)=0$, then $M\oplus N$ is perspective.
\end{corollary}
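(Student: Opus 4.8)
The plan is to pass to endomorphism rings and exploit that perspectivity is an \textbf{ER}-property (Theorem \ref{3.4}). Writing $R_1=\mathrm{End}_R(M)$, $R_2=\mathrm{End}_R(N)$ and $V=\mathrm{Hom}_R(N,M)$, the hypothesis $\mathrm{Hom}_R(M,N)=0$ forces every endomorphism of $M\oplus N$ into upper-triangular block form, so that $T:=\mathrm{End}_R(M\oplus N)$ is the formal triangular matrix ring $\left(\begin{smallmatrix}R_1&V\\0&R_2\end{smallmatrix}\right)$, with $V$ an $(R_1,R_2)$-bimodule. By Theorem \ref{3.4}, $R_1$ and $R_2$ are perspective rings, and it suffices to prove that $T$ is a perspective ring; the module statement then follows from the \textbf{ER}-property. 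I would verify condition (4) of Theorem \ref{3.4} for $T$.

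First I would record the ambient structure: the diagonal map $T\to R_1\times R_2$ is a surjective ring homomorphism whose kernel $I=\left(\begin{smallmatrix}0&V\\0&0\end{smallmatrix}\right)$ satisfies $I^{2}=0$. Given an idempotent $e=\left(\begin{smallmatrix}e_1&u\\0&e_2\end{smallmatrix}\right)$ (with $u\in V$) and $r,s\in T$ with $erse=e$, projecting to each diagonal factor yields $e_ir_is_ie_i=e_i$ for $i=1,2$. Perspectivity of $R_i$ (condition (4)) then supplies $t_i\in R_i$ with $e_ir_it_ie_i=e_i$ and $e_it_ie_i\in U(e_iR_ie_i)$. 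Taking the diagonal candidate $\mathrm{diag}(t_1,t_2)$, the corner $ete$ has invertible diagonal entries; since $eIe$ is a square-zero (hence nil) ideal of the triangular ring $eTe\cong\left(\begin{smallmatrix}e_1R_1e_1&e_1Ve_2\\0&e_2R_2e_2\end{smallmatrix}\right)$, the usual unit criterion for triangular rings gives $ete\in U(eTe)$, and this will be unaffected by any later change of the strictly-upper entry.

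The substantive point — and the step I expect to be the main obstacle — is to upgrade $erte=e$ from holding modulo $I$ to holding on the nose, i.e. to match the off-diagonal entry. Seeking $t=\mathrm{diag}(t_1,t_2)+\left(\begin{smallmatrix}0&w\\0&0\end{smallmatrix}\right)$, a direct expansion of $erte$ together with the idempotent identity $e_1u+ue_2=u$ (whence $e_1ue_2=0$) collapses the requirement to the single bimodule equation $e_1r_1we_2=\nu$ for an explicit $\nu\in e_1Ve_2$. The key observation is that $t_1$ serves as a one-sided inverse here: because $e_1r_1t_1e_1=e_1$ and $\nu=e_1\nu e_2$, the choice $w=t_1\nu$ solves it, since $e_1r_1(t_1\nu)e_2=(e_1r_1t_1e_1)\nu e_2=e_1\nu e_2=\nu$. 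With this $w$ one has $erte=e$ while $ete$ remains a unit of $eTe$, so $T$ satisfies condition (4). Hence $T$ is perspective and, by the \textbf{ER}-property, so is $M\oplus N$. In the write-up I would double-check only the routine triangular-ring facts (the block multiplication, the unit criterion, and the corner form of $eTe$), as these are where placement slips are most likely.
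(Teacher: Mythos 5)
Your proof is correct, and it is worth noting that the paper itself contains no proof of Corollary \ref{5.10}: the statement is imported verbatim from \cite{gar}, where it is likewise obtained by passing to $T=\mathrm{End}_R(M\oplus N)\cong\left(\begin{smallmatrix}R_1&V\\0&R_2\end{smallmatrix}\right)$ and transferring perspectivity across the square-zero ideal $I=\left(\begin{smallmatrix}0&V\\0&0\end{smallmatrix}\right)$ with $T/I\cong R_1\times R_2$, using general lemmas (idempotents and units lift modulo nil ideals; finite products of perspective rings are perspective, the same mechanism as Corollary \ref{prod}). Your argument is a self-contained, hands-on verification of condition (4) of Theorem \ref{3.4} for $T$ rather than an appeal to such a transfer lemma, and the details check out: from $e^2=e$ one gets $e_1ue_2=0$ \emph{and} $(1-e_1)u(1-e_2)=0$, and these together with $e_1r_1t_1e_1=e_1$ and $e_2r_2t_2e_2=e_2$ show that the residual off-diagonal constraint is $e_1r_1we_2=\nu$ with $\nu=u-e_1r_1t_1u-e_1\rho t_2e_2-ur_2t_2e_2$ (where $\rho$ is the corner entry of $r$), and that $e_1\nu e_2=\nu$ indeed holds --- the $t_2$-equation gives $(1-e_1)\nu=0$ and the $t_1$-equation gives $\nu(1-e_2)=0$, so your ``one-sided inverse'' observation is used twice before it is used a third time in $w=t_1\nu$. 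One small imprecision to repair in the write-up: when $u\neq 0$, $eTe$ is not \emph{literally} the triangular ring $\left(\begin{smallmatrix}e_1R_1e_1&e_1Ve_2\\0&e_2R_2e_2\end{smallmatrix}\right)$, since its strictly upper entries have the form $e_1au+e_1ve_2+ube_2$; it is only isomorphic to it, via conjugation by the unit $1+\left(\begin{smallmatrix}0&e_1u-ue_2\\0&0\end{smallmatrix}\right)$, which diagonalizes $e$. Your argument never needs the literal triangular form, however: all that is used is $eTe\cap I=eIe$, that $(eIe)^2\subseteq e(ITI)e=0$, and that the quotient $eTe/eIe\cong e_1R_1e_1\times e_2R_2e_2$ sends $ete$ to the unit $(e_1t_1e_1,e_2t_2e_2)$, whence $ete\in U(eTe)$ by lifting units modulo a nil ideal, insensitive to the later change of $w$. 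With that phrasing adjusted, the proof stands as written and buys a concrete, citation-free proof of exactly what the paper quotes from \cite{gar}.
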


\begin{remark}
\normalfont\label{5.13}The group $\mathbb{Z}\oplus \mathbb{Z}$ is not
perspective. For example, $(2,5)\mathbb{Z}$ and $(1,0)\mathbb{Z}$ are
isomorphic direct summands of $\mathbb{Z}^{2}$ as a $\mathbb{Z}$-module,
which do not have a common complement.
\end{remark}

From \cite{sch} we recall a \emph{method of constructing common complements}
for some special direct sums.

Let $G=H\oplus K$. A subgroup $D$ of $G$ is called a \textsl{diagonal} in $G$
(with respect to $H$ and $K$) if $D+H=G=D+K$ and $D\cap H=0=D\cap K$.

\begin{theorem}
\label{dia}Let $G=H\oplus K$. If $\delta :H\rightarrow K$\ is an isomorphism
then $D(\delta )=D(H,\delta )=\{x+\delta (x)|x\in H\}=(1+\delta )(H)$\ is a
diagonal in $G$\ (with respect to $H$\ and $K$). Conversely, if $D$\ is a
diagonal in $G$\ (with respect to $H$\ and $K$) there is a unique
isomorphism $\delta :H\rightarrow K$\ such that $D=D(\delta )$.
\end{theorem}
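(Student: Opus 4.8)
The plan is to treat the two directions separately. For the forward direction I would take $D(\delta)=\{x+\delta(x)\mid x\in H\}$ and verify the four defining conditions of a diagonal directly. To see $D(\delta)+H=G$, given $g=h+k$ with $h\in H$, $k\in K$, I would use surjectivity of $\delta$ to write $k=\delta(x)$ and then rewrite $g=(h-x)+(x+\delta(x))$; for $D(\delta)+K=G$ I would instead write $g=(h+\delta(h))+(k-\delta(h))$, using only that $\delta(h)\in K$. For the two intersection conditions, I would observe that if $x+\delta(x)\in H$ then $\delta(x)\in H\cap K=0$, forcing $x=0$ by injectivity of $\delta$; symmetrically, if $x+\delta(x)\in K$ then $x\in H\cap K=0$. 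Thus the injectivity and surjectivity of $\delta$, together with $H\cap K=0$, deliver exactly the four conditions.

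For the converse, the key observation is that a diagonal $D$ is a common complement of $H$ and $K$: the conditions $D+H=G$, $D\cap H=0$ give $G=H\oplus D$, and likewise $G=K\oplus D$. I would then restrict the canonical projections $\pi_{H},\pi_{K}$ attached to $G=H\oplus K$ to the subgroup $D$. I claim $\pi_{H}|_{D}\colon D\to H$ is an isomorphism: it is injective because its kernel lies in $D\cap K=0$, and surjective because, given $h\in H$, writing $h=d+k$ with $d\in D$, $k\in K$ (using $G=D+K$) yields $\pi_{H}(d)=h$. The same argument shows $\pi_{K}|_{D}\colon D\to K$ is an isomorphism, so I may define
\[
\delta=(\pi_{K}|_{D})\circ(\pi_{H}|_{D})^{-1}\colon H\to K,
\]
a composite of isomorphisms and hence itself an isomorphism.

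To confirm $D=D(\delta)$, I would track an element $d\in D$ through its decomposition $d=\pi_{H}(d)+\pi_{K}(d)$: setting $x=\pi_{H}(d)$, the definition of $\delta$ gives $\delta(x)=\pi_{K}(d)$, so $d=x+\delta(x)\in D(\delta)$, and conversely each $x+\delta(x)$ is the unique element of $D$ projecting to $x$ under $\pi_{H}$. For uniqueness, if $D(\delta)=D(\delta')$ then for each $x\in H$ the equality $x+\delta(x)=x'+\delta'(x')$ forces $x-x'\in H\cap K=0$, whence $x=x'$ and $\delta(x)=\delta'(x)$, so $\delta=\delta'$.

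I expect no serious obstacle. The only point requiring genuine care is the verification that the restricted projections $\pi_{H}|_{D}$ and $\pi_{K}|_{D}$ are isomorphisms, since this is precisely where all four diagonal conditions are used simultaneously, and the subsequent bookkeeping that identifies $d=\pi_{H}(d)+\pi_{K}(d)$ with $x+\delta(x)$; once these are in place the equality $D=D(\delta)$ and uniqueness follow routinely.
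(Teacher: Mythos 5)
Your proof is correct in all details, and both directions (the verification of the four diagonal conditions via surjectivity/injectivity of $\delta$ and $H\cap K=0$, and the converse via the restricted projections $\pi_{H}|_{D}$, $\pi_{K}|_{D}$ being isomorphisms) are the standard argument. Note that the paper itself states this theorem without proof, recalling it from Schmidt's book \cite{sch}, so there is no in-paper proof to compare against; your write-up supplies exactly the expected one.
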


Thus, there is a bijection between the diagonals (with respect to $H$ and $K$%
) and isomorphisms of $H$ and $K$.

Every subgroup $U$ of a direct sum $G=H\oplus K$ belongs to the direct
product $\mathbf{L}=L(H)\times L(K)$ (i.e., has the form $H^{\prime }\oplus
K^{\prime }$ for $H^{\prime }\leq H$ and $K^{\prime }\leq K$) or is a
diagonal.

\section{Perspective Abelian groups}

First about the

\subsection{Definition}

As the Abelian groups analogue for $\mathbb{Z}$-modules, an (Abelian group) $%
G$ is called \textsl{perspective} if isomorphic summands of $G$ have a
common complement.

In symbols: if $G=A\oplus H=B\oplus K$ with $A\cong B$, there exists (a
summand) $C$ such that $G=A\oplus C=B\oplus C$.

\textbf{Nonexample}. Let $N$ be a group such that $N\ncong N\oplus N$ and
let $G=N_{1}\oplus N_{2}\oplus N_{3}\oplus ...$ countably many copies with $%
N_{n}=N$. Then $G$ is \emph{not} IC (and so nor perspective).

Indeed, $H=N_{2}\oplus N_{3}\oplus ...$ and $S=N_{3}\oplus ...$ are
isomorphic summands, but $G/H\cong N\ncong N\oplus N\cong G/S$.

\textbf{Remarks}. 1) Obviously, \emph{if two summands of a group have a
common complement, these are isomorphic}.

Indeed, if $G=H\oplus K=L\oplus K$ then $H\cong G/K\cong L$.

Therefore, perspectivity is a converse of this property.

2) \emph{Indecomposable groups are trivially perspective} (e.g., any
infinite cyclic group, any cocyclic group or rank 1 torsion-free group).

3) We mention (see Proposition \textbf{3.10}, \cite{KMT}) that \emph{for two
idempotent endomorphisms }$\varepsilon $\emph{, }$\delta $\emph{\ of }$G$%
\emph{, }$\varepsilon (G)\cong \delta (G)$\emph{\ iff there exists
endomorphisms }$\alpha $\emph{, }$\beta $\emph{\ of }$G$\emph{\ such that }$%
\varepsilon =\alpha \beta $\emph{\ and }$\delta =\beta \alpha $\emph{. }Also
equivalently, \emph{the left }$\mathrm{End}(G)$\emph{-modules} $\mathrm{End}%
(G)\varepsilon $ \emph{and} $\mathrm{End}(G)\delta $ \emph{are isomorphic to
each other. }Since we deal only with direct summands, equivalently, we can
deal only with the idempotent endomorphisms of the group. Thus, two "\textsl{%
isomorphic}" endomorphisms $\varepsilon $, $\delta $ [i.e., $\mathrm{im}%
(\varepsilon )\cong \mathrm{im}(\delta )$] of a group $G$ are \textsl{%
perspective}, if there is an endomorphism $\gamma $ such that $\mathrm{im}%
(\varepsilon )\oplus \mathrm{im}(\gamma )=G=\mathrm{im}(\delta )\oplus 
\mathrm{im}(\gamma )$.

More general, the group is \textsl{IC} if for any two endomorphisms $%
\varepsilon $, $\delta $ of $G$, $\mathrm{im}(\varepsilon )\cong \mathrm{im}%
(\delta )$ implies $G/\mathrm{im}(\varepsilon )\cong G/\mathrm{im}(\delta )$.

4) The group $G$ is indecomposable iff $\mathrm{End}(G)$ has only the
trivial idempotents (is \textsl{connected}). Such groups are trivially
perspective.

\bigskip

\emph{Apparently} there is another definition one could give for perspective
Abelian groups.

\textbf{Definition}. An Abelian group $G$ is called \textsl{e-perspective}
if its endomorphism ring $\mathrm{End}(G)$ is (left or right) perspective.

However, it follows from Theorem \ref{3.4} that for a module $M_{S}$ with $R=%
\mathrm{End}_{S}(M)$, $M_{S}$ is perspective iff $R_{R}$ is perspective,
i.e., perspectivity is an \textbf{ER}-property. Therefore, for $S=\mathbb{Z}$
it follows that

\begin{proposition}
An Abelian group is perspective iff it is e-perspective.
\end{proposition}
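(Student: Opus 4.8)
The plan is to invoke Theorem~\ref{3.4} directly with the specialization $S=\mathbb{Z}$. Recall that every Abelian group $G$ is precisely a module $M_{\mathbb{Z}}$ over the ring $\mathbb{Z}$, and by definition the group-theoretic notion of perspectivity for $G$ coincides with the module-theoretic perspectivity of $M_{\mathbb{Z}}$. I would open by making this identification explicit: $G$ is perspective as an Abelian group if and only if $M_{\mathbb{Z}}=G$ is perspective as a $\mathbb{Z}$-module, since isomorphic summands and common complements mean the same thing in both settings (the $\mathbb{Z}$-submodules of $G$ are exactly its subgroups).

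Next I would set $R=\mathrm{End}_{\mathbb{Z}}(M)=\mathrm{End}(G)$, which is exactly the endomorphism ring of the group $G$ in the notation fixed in the Introduction. Theorem~\ref{3.4} asserts, in its final clause, that $M_{S}$ is perspective if and only if $R_{R}$ is perspective, where $R=\mathrm{End}_{S}(M)$. Applying this with $S=\mathbb{Z}$ yields immediately that $G$ (as a $\mathbb{Z}$-module) is perspective if and only if $R_{R}=\mathrm{End}(G)_{\mathrm{End}(G)}$ is perspective as a right module over itself, i.e.\ if and only if the ring $\mathrm{End}(G)$ is right perspective.

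It then remains only to reconcile this with the definition of \textsl{e-perspective}, which requires $\mathrm{End}(G)$ to be left \emph{or} right perspective. Here I would cite the fact, already recorded in the Introduction, that perspectivity for rings is a left--right symmetric property: $_{R}R$ is perspective if and only if $R_{R}$ is. Consequently ``$\mathrm{End}(G)$ is right perspective'' is equivalent to ``$\mathrm{End}(G)$ is a perspective ring'' in the unqualified sense, which is exactly the condition defining e-perspectivity. Chaining these equivalences gives that $G$ is perspective if and only if it is e-perspective.

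The proof is essentially a bookkeeping exercise, so there is no genuine mathematical obstacle; the only point requiring a moment of care is the appeal to left--right symmetry, so that the ``left or right'' in the definition of e-perspective collapses to the single ER-property delivered by Theorem~\ref{3.4}. Both ingredients---the ER-characterization and the left--right symmetry---are stated earlier in the excerpt, so the argument is a direct specialization and no calculation is needed.
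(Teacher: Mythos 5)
Your proof is correct and follows essentially the same route as the paper, which likewise obtains the proposition as an immediate specialization of Theorem~\ref{3.4} to $S=\mathbb{Z}$. Your explicit reconciliation of the ``left or right'' clause in the definition of e-perspective via the left--right symmetry of ring perspectivity is a small piece of bookkeeping the paper leaves implicit, but it is the same argument.
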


Hence, in the sequel we can use any of these two (equivalent) definitions.

\subsection{Reduction theorems}

\begin{proposition}
\label{dir}Summands of perspective groups are perspective.
\end{proposition}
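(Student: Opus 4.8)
The quickest route is to observe that this is simply the $\mathbb{Z}$-module instance of Proposition \ref{5.4}, since every Abelian group is a $\mathbb{Z}$-module; so in principle nothing new is required. Nevertheless, I would record a direct, self-contained argument, because the transfer of a common complement from the ambient group down to a summand is instructive and will likely be reused elsewhere in the paper.

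The plan is as follows. Let $G$ be perspective and write $G = M \oplus N$; the goal is to show that $M$ is perspective. Take two isomorphic summands $A \cong B$ of $M$, say $M = A \oplus H = B \oplus K$. First I would lift these decompositions to $G$: from $M = A \oplus H$ we get $G = A \oplus (H \oplus N)$, and from $M = B \oplus K$ we get $G = B \oplus (K \oplus N)$. Thus $A$ and $B$ are isomorphic summands of $G$, so the perspectivity of $G$ furnishes a single subgroup $C$ with $G = A \oplus C = B \oplus C$.

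The key step — and the only place where any care is needed — is to push $C$ back inside $M$. Here I would set $C' = M \cap C$ and invoke the Dedekind modular law. Since $A \subseteq M$, modularity gives $M = M \cap (A + C) = A + (M \cap C)$, while $A \cap (M \cap C) \subseteq A \cap C = 0$; hence $M = A \oplus C'$. The identical computation with $B$ in place of $A$ yields $M = B \oplus C'$. Therefore $C'$ is a common complement of $A$ and $B$ inside $M$, and $M$ is perspective.

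The main obstacle is precisely this last transfer: the complement $C$ produced by the perspectivity of $G$ need not lie inside $M$, and one must manufacture one that does. The modular law resolves this cleanly; the only thing to double-check is that $A$ and $B$ are genuinely subgroups of $M$ (they are, being summands of $M$), so that modularity applies with $A, B \subseteq M$. No further structure special to Abelian groups is used, which is consistent with the statement being a formal consequence of the module-level Proposition \ref{5.4}.
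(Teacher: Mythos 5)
Your proposal is correct and follows essentially the same route as the paper's own proof: lift the two decompositions of the summand to the ambient perspective group, obtain a common complement $C$ there, and then cut it down via the Dedekind modular law to $M\cap C$, which serves as the common complement inside the summand. Your preliminary remark that the statement is also an instance of Proposition~\ref{5.4} is accurate, but the direct argument you give is precisely the one the paper records.
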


\begin{proof}
Suppose $G=H\oplus K$ and $H=S\oplus T=L\oplus N$ with $S\cong L$. Since
these are direct summands also in $G$, by hypothesis, there is $M\leq
^{\oplus }G$ such that $G=S\oplus M=L\oplus M$. Then, by modularity of the
subgroup lattice: $H=G\cap H=(\underline{S}\oplus M)\cap \underline{H}%
\overset{\mathrm{\mathrm{mod}}}{=}\underline{S}\oplus (M\cap \underline{H})$
(since $S\leq H$) and similarly $H=L\oplus (M\cap H)$, so $M\cap H$ is a
common complement for $S$ and $L$.
\end{proof}

\begin{proposition}
\label{fu-inv}Let $G=\bigoplus\limits_{i\in I}H_{i}$ where each summand $%
H_{i}$ is fully invariant in $G$. Then $G$ is perspective iff all $H_{i}$, $%
i\in I$, are perspective.
\end{proposition}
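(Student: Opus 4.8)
The plan is to reduce both implications to the endomorphism-ring criterion, exploiting the fact that full invariance forces $\mathrm{End}(G)$ to split as a direct product indexed by $I$. The forward implication is immediate: each $H_{i}$ is a summand of $G$, so if $G$ is perspective then so is every $H_{i}$ by Proposition \ref{dir}.

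For the converse, the first step is to establish the ring isomorphism $\mathrm{End}(G)\cong \prod_{i\in I}\mathrm{End}(H_{i})$. Given any $\phi \in \mathrm{End}(G)$, full invariance of each $H_{i}$ gives $\phi(H_{i})\subseteq H_{i}$, so $\phi$ restricts to $\phi_{i}:=\phi|_{H_{i}}\in \mathrm{End}(H_{i})$ and is completely determined by the family $(\phi_{i})_{i\in I}$, since $\phi\bigl(\sum_{i}x_{i}\bigr)=\sum_{i}\phi_{i}(x_{i})$ on the finitely supported elements of $\bigoplus_{i}H_{i}$. Conversely, an arbitrary family $(\psi_{i})_{i\in I}$ with $\psi_{i}\in \mathrm{End}(H_{i})$ defines an endomorphism of $G$ by the same formula. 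One then checks that $\phi\mapsto(\phi_{i})_{i\in I}$ is a ring isomorphism onto the full product $\prod_{i\in I}\mathrm{End}(H_{i})$: it is clearly additive and bijective, and it is multiplicative precisely because there are no off-diagonal components, so composition is coordinatewise.

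Once this identification is in place, the conclusion follows by assembling the earlier results. Each $H_{i}$ is perspective iff $\mathrm{End}(H_{i})$ is a perspective ring (by the equivalence of perspectivity and $e$-perspectivity), so Corollary \ref{prod} shows that the product $\prod_{i\in I}\mathrm{End}(H_{i})$ is a perspective ring. Transporting back through the isomorphism, $\mathrm{End}(G)$ is perspective, and hence $G$ is perspective, again by the $e$-perspective equivalence.

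The main obstacle is the first step: verifying that full invariance really does annihilate every off-diagonal homomorphism $H_{i}\to H_{j}$ for $i\neq j$, and that the resulting diagonal description yields the \emph{full} direct product $\prod_{i\in I}\mathrm{End}(H_{i})$ rather than merely a column-finite subring, together with the check that composition is coordinatewise so that the bijection is a genuine ring isomorphism. This is exactly what makes Corollary \ref{prod}, stated for arbitrary products, applicable even when the index set $I$ is infinite.
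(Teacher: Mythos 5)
Your proof is correct and takes essentially the same route as the paper: the paper disposes of the forward direction via Proposition \ref{dir} and proves the converse by combining Corollary \ref{prod} with the ring isomorphism $\mathrm{End}(G)\cong \prod_{i\in I}\mathrm{End}(H_{i})$, which it cites from Fuchs (Theorem 106.1, the $I\times I$ matrices are diagonal) and which you instead verify by hand, correctly noting that finite support makes the map surjective onto the full product. One small caution on your closing paragraph: full invariance does not annihilate every homomorphism $H_{i}\to H_{j}$, only the off-diagonal components $\pi_{j}\phi|_{H_{i}}$ of endomorphisms $\phi$ of $G$, which is what your argument in the body actually (and correctly) uses.
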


\begin{proof}
By Corollary \ref{prod}, arbitrary products of perspective rings are
perspective. It just remains to note that $\mathrm{End}(G)\cong
\prod\limits_{i\in I}\mathrm{End}(H_{i})$ (see Theorem \textbf{106.1}, \cite%
{fuc2}, the $I\times I$ matrices are diagonal).
\end{proof}

\begin{corollary}
\label{tor}Let $G$ be a torsion group. Then $G$ is perspective iff so are
all its primary components.
\end{corollary}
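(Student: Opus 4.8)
The plan is to deduce this corollary directly from Proposition \ref{fu-inv}, since the primary components of a torsion group are exactly a fully invariant direct decomposition. First I would recall the standard structure theorem for torsion Abelian groups: if $G$ is torsion, then $G=\bigoplus_{p}G_{p}$, where the sum runs over all primes $p$ and $G_{p}=\{x\in G\mid p^{n}x=0\text{ for some }n\geq 0\}$ is the $p$-primary component of $G$ (see Theorem 8.4 in \cite{fuc2}, say). This is a genuine internal direct sum decomposition of $G$ indexed by the set of primes.

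The key step is to verify that each primary component $G_{p}$ is fully invariant in $G$, so that Proposition \ref{fu-inv} applies verbatim. This is immediate: any endomorphism $\varphi\in\mathrm{End}(G)$ preserves the orders of elements in the sense that if $p^{n}x=0$ then $p^{n}\varphi(x)=\varphi(p^{n}x)=0$, whence $\varphi(G_{p})\subseteq G_{p}$. Thus every $G_{p}$ is fully invariant, and the decomposition $G=\bigoplus_{p}G_{p}$ is a fully invariant direct decomposition of the required form.

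Applying Proposition \ref{fu-inv} to this decomposition then gives immediately that $G$ is perspective if and only if every $G_{p}$ is perspective, which is exactly the claim. I expect there to be no real obstacle here: the entire content has been packaged into Proposition \ref{fu-inv}, and the only thing to check is the (textbook) fact that primary components are fully invariant summands of a torsion group. The proof is therefore essentially a one-line invocation, and the only thing I would be careful about is citing the decomposition theorem correctly and noting that full invariance is precisely what lets me reuse the previous proposition without modification.
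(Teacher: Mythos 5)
Your proof is correct and is exactly the paper's argument: the paper disposes of this corollary as ``a straightforward application'' of Proposition \ref{fu-inv}, and you have simply made explicit the two standard facts it leaves implicit, namely the primary decomposition $G=\bigoplus_{p}G_{p}$ and the full invariance of each $G_{p}$. The only trivial quibble is the citation: the primary decomposition theorem is in Fuchs, vol.~1 (\cite{fuc1}, Theorem 8.4), not \cite{fuc2}.
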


\begin{proof}
A straightforward application of the previous proposition.
\end{proof}

As customarily, this reduces the study of perspective torsion groups to
perspective $p$-groups, for any prime $p$.

\bigskip

We can use Corollary \ref{5.10} whenever $\mathrm{Hom}(G,H)=0$, that is,
for: (i) $G$ torsion, $H$ torsion-free, (ii) $G$ a $p$-group and $H$ a $q$%
-group with different primes $p\neq q$, (iii) $G$ divisible and $H$ reduced.
Thus

\begin{corollary}
\label{red}(a) Let $G=T(G)\oplus F$ where $T(G)$ is the torsion part of $G$
and $F$ is torsion-free. Then $G$ is perspective iff $T(G)$ and $F$ are
perspective.

(b) Let $G=D(G)\oplus R$ where $D(G)$ is the divisible part of $G$ and $R$
is reduced. Then $G$ is perspective iff $D(G)$ and $R$ are perspective.
\end{corollary}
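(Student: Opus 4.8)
The plan is to treat both parts in parallel, since each has the form ``$G = M \oplus N$ is perspective iff $M$ and $N$ are,'' and in each case exactly one of the two mixed Hom-groups vanishes. The forward implication in both (a) and (b) is immediate: $T(G)$ and $F$ (respectively $D(G)$ and $R$) are summands of $G$, so if $G$ is perspective, then Proposition \ref{dir} gives that each summand is perspective.

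For the converse I would invoke Corollary \ref{5.10}, which produces perspectivity of a direct sum $M \oplus N$ of perspective modules from the single hypothesis $\mathrm{Hom}(M, N) = 0$. The task thus reduces to checking the appropriate vanishing. In (a) I would take $M = T(G)$ and $N = F$ and note that any homomorphism from a torsion group has torsion image, which must be $0$ inside the torsion-free group $F$; hence $\mathrm{Hom}(T(G), F) = 0$. In (b) I would take $M = D(G)$ and $N = R$ and use that the image of a divisible group is divisible, so it must be $0$ inside the reduced group $R$; hence $\mathrm{Hom}(D(G), R) = 0$. These are precisely cases (i) and (iii) listed before the statement.

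The one point demanding care --- rather than a genuine obstacle --- is the asymmetry of the hypothesis in Corollary \ref{5.10}. Only $\mathrm{Hom}(M, N) = 0$ is required, not both mixed Hom-groups; and indeed the reverse groups $\mathrm{Hom}(F, T(G))$ and $\mathrm{Hom}(R, D(G))$ need not vanish (for instance $\mathbb{Z} \to \mathbb{Z}(2)$ and $\mathbb{Z} \to \mathbb{Q}$). So I must order the summands correctly, placing the torsion (respectively divisible) part first, so that the relevant Hom-group is the one that is forced to be trivial. With that ordering fixed, Corollary \ref{5.10} applies directly and the converse follows, completing both parts.
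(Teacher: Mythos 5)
Your proof is correct and is essentially the paper's own argument: the paper derives this corollary directly from Corollary \ref{5.10} together with the vanishing of $\mathrm{Hom}(T(G),F)$ and $\mathrm{Hom}(D(G),R)$ (cases (i) and (iii) listed just before the statement), with Proposition \ref{dir} handling the forward direction. Your explicit remark about the asymmetry of the hypothesis in Corollary \ref{5.10} and the need to order the summands correctly is a point the paper leaves implicit, and it is handled correctly.
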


\textbf{Examples}. $\mathbb{Z}_{m}\oplus \mathbb{Z}$, $\mathbb{Z}_{p^{\infty
}}\oplus \mathbb{Z}$ or $\mathbb{Z}_{p^{\infty }}\oplus \mathbb{Q}$, all are
perspective (splitting) mixed groups.

\bigskip

Therefore, the study of splitting mixed perspective (Abelian) groups reduces
to perspective primary groups and to torsion-free groups. Moreover, it
reduces to perspective divisible groups and to reduced groups.

According to these reductions, the study of perspective (Abelian) groups
reduces to reduced perspective $p$-groups, reduced perspective torsion-free
groups and mixed perspective not splitting groups. Some of such results are
proved in the sequel. The reader can convince himself that even for highly
predictable (for Abelian groups theorists) results, in the torsion and in
the torsion-free cases, the proofs are not at all easy.

Therefore the nonsplitting mixed case was not addressed. Since all we know
about the torsion part of such a group is that it is pure in the whole
group, the question of which pure subgroups of perspective groups are also
perspective (just partly addressed) becomes central.

\subsection{Perspective $p$-groups}

First, perspective divisible $p$-groups are described below. For the proof,
recall that the socle of $\mathbb{Z}(p^{\infty })$ is its smallest nonzero
subgroup (having order $p$) and that each subsocle of divisible $p$-group $D$
supports some summand of $D$.

\begin{proposition}
\label{pgru}A divisible $p$-group is perspective iff it has finite rank. As
such it is isomorphic to a finite direct sum of $\mathbb{Z}(p^{\infty })$.
\end{proposition}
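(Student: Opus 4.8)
The plan is to start from the structure theorem for divisible $p$-groups: every divisible $p$-group $D$ is isomorphic to $\bigoplus_{i\in I}\mathbb{Z}(p^{\infty})$, where $|I|$ is the rank of $D$ and equals $\dim_{\mathbb{F}_{p}}D[p]$, the dimension of the socle $D[p]$ as a vector space over $\mathbb{F}_{p}=\mathbb{Z}(p)$. This already yields the last sentence of the statement, so it remains to prove that $D$ is perspective if and only if $|I|$ is finite, and I would treat the two implications separately.

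For the necessity I would argue contrapositively. If $D$ has infinite rank, then I can split off a countable piece, writing $D=D_{0}\oplus D_{1}$ with $D_{0}\cong\bigoplus_{n\geq 1}\mathbb{Z}(p^{\infty})$. Since $\mathbb{Z}(p^{\infty})\ncong\mathbb{Z}(p^{\infty})\oplus\mathbb{Z}(p^{\infty})$ (the ranks differ), $D_{0}$ is precisely an instance of the Nonexample recorded above, with $N=\mathbb{Z}(p^{\infty})$: the tails $N_{2}\oplus N_{3}\oplus\cdots$ and $N_{3}\oplus N_{4}\oplus\cdots$ are isomorphic summands whose quotients are $\mathbb{Z}(p^{\infty})$ and $\mathbb{Z}(p^{\infty})\oplus\mathbb{Z}(p^{\infty})$, so $D_{0}$ fails to be $IC$ and hence is not perspective. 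By Proposition \ref{dir}, summands of a perspective group are perspective, so $D$ itself cannot be perspective.

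For the sufficiency, assume $D$ has finite rank $n$, so that $D[p]$ is an $n$-dimensional $\mathbb{F}_{p}$-vector space, and transfer the problem to the socle. Let $A\cong B$ be summands of $D$; summands of divisible groups are divisible and rank is an isomorphism invariant, so $V:=A[p]$ and $W:=B[p]$ are subspaces of $D[p]$ of the same dimension $k$. I would first prove the linear-algebra lemma that two equidimensional subspaces of a finite-dimensional vector space have a common complement: construct an independent family $u_{1},u_{2},\dots$ one vector at a time, at each stage choosing $u_{m+1}$ outside the two proper subspaces $V+\langle u_{1},\dots,u_{m}\rangle$ and $W+\langle u_{1},\dots,u_{m}\rangle$, which is possible because a vector space is never the union of two proper subspaces; stopping at $m=n-k$ produces $U$ with $D[p]=V\oplus U=W\oplus U$. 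Using the recalled fact that every subsocle supports a summand, I pick a summand $C\leq^{\oplus}D$ with $C[p]=U$. Then I lift the socle decomposition back to $D$: since $(A\cap C)[p]\subseteq V\cap U=0$ and $A\cap C$ is a $p$-group, $A\cap C=0$, so $A+C=A\oplus C$ is divisible with socle $V\oplus U=D[p]$; a divisible subgroup of $D$ whose socle is all of $D[p]$ must equal $D$ (otherwise $D/(A\oplus C)$ would be a nonzero $p$-group with trivial socle). Hence $D=A\oplus C$, and symmetrically $D=B\oplus C$, so $C$ is the sought common complement.

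I expect the main obstacle to lie in the sufficiency direction, specifically in the step that genuinely uses divisibility, namely passing from a direct-sum decomposition of the socle to one of $D$ itself (the assertions $A\cap C=0$ and $A\oplus C=D$). The linear-algebra lemma and the reduction to the socle are routine once the correspondence ``subsocles support summands'' is available, but everything hinges on the fact that a divisible $p$-subgroup of $D$ whose socle is all of $D[p]$ must coincide with $D$; this is what makes the socle a faithful enough invariant to carry the whole argument.
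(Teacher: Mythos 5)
Your proof is correct, but the sufficiency half takes a genuinely different route from the paper. The paper proceeds by induction on the rank of $D$ with a multi-case analysis: if $A+C\neq D$ it passes to the smaller divisible summand $A+C$; if $A+C=D$ and $A\cap C=0$ it builds a common complement as a diagonal via Theorem \ref{dia}; and if $A\cap C\neq 0$ it splits off summands $A_{1}$, $C_{1}$ with $A_{1}[p]=C_{1}[p]=(A\cap C)[p]$, invoking a fact from \cite{fuc1} that summands of $D$ with \emph{equal} socles admit a common complement, and recurses. You instead solve the whole problem in one pass at the socle level: the common-complement lemma for equidimensional subspaces of a finite-dimensional $\mathbb{F}_{p}$-space (your greedy construction, using that no vector space is a union of two proper subspaces, is sound), then lifting via the recalled fact that subsocles of a divisible $p$-group support summands, and finally the observation that a divisible subgroup of $D$ with full socle equals $D$ --- which you correctly identify as the crux, and which can be seen even more quickly by noting $A\oplus C$ is divisible, hence a summand, whose complement has zero socle. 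In effect your key lemma is the general form of the special case ($A[p]=C[p]$) that the paper cites from Fuchs, so your argument subsumes the paper's hardest case and eliminates both the induction and the diagonal construction, yielding a shorter proof that moreover shows the common complement may be taken to support a common complement of the socles. What the paper's template buys in exchange is reusability: essentially the same inductive case-analysis scheme recurs in its proofs for finite $p$-groups (Proposition \ref{fini}), divisible torsion-free groups (Proposition \ref{divtf}) and homogeneous completely decomposable groups (Proposition \ref{hcd}), where the socle is no longer a faithful invariant and your reduction would not transfer. Your necessity argument (split off a countable $\bigoplus_{n\geq 1}\mathbb{Z}(p^{\infty})$, apply the Nonexample with $N=\mathbb{Z}(p^{\infty})$ and Proposition \ref{dir}) is exactly the paper's.
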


\begin{proof}
As already mentioned, an infinite rank direct sum of $\mathbb{Z}(p^{\infty
}) $ is not perspective (see nonexample in the preceding section).
Conversely, let $D$ be a divisible $p$-group. The proof goes by induction on
rank of $D$. If $r(D)=1$ then $D$ is indecomposable and so trivially
perspective.

Let $r(D)=n+1$, $D=A\oplus B=C\oplus U$ and $A\cong C$. We go into several
cases.

\textbf{1)} Let $A+C\neq D$. Then $D=(A+C)\oplus D^{\prime }$ for some $%
D^{\prime }\neq 0$ and so $A$ and $C$ are summands in a divisible group $A+C$
of rank $\leq n$. By induction $A+C=A\oplus K=C\oplus K$ for some $K$,
whence $D=A\oplus (K\oplus D^{\prime })=C\oplus (K\oplus D^{\prime })$.

\textbf{2)} Let $A+C=D$.

a) If $A\cap C=0$ then $D=A\oplus C$, so $B\cong C$. Denote an isomorphism
by $f:B\rightarrow C$. Using Theorem \ref{dia}, the subgroup $B^{\prime
}=\{b+f(b)\,|\,b\in B\}$ is a diagonal, so a summand of $D$ such that $A\cap
D^{\prime }=C\cap B^{\prime }=0$ and $D=A\oplus B^{\prime }=C\oplus
B^{\prime }$.

b) Let $A\cap C\neq 0$, so $(A\cap C)[p]\leq A[p],C[p]$ where $A[p]\cong
C[p] $.

If $(A\cap C)[p]=A[p]$ then also $(A\cap C)[p]=C[p]$. Hence $A[p]=C[p]$, and
by~\cite{fuc1}, $D=A\oplus B=C\oplus B$.

Next assume that $0\neq (A\cap C)[p]<A[p]$, so $(A\cap C)[p]<C[p]$. There
exist a summand $A_{1}$ of $A$ with $A_{1}[p]=(A\cap C)[p]$, $A=A_{1}\oplus
A_{2}$. Similarly $C=C_{1}\oplus C_{2}$, where $C_{1}[p]=(A\cap C)[p]$.
Since $A\cong C$ and $A_{1}[p]=C_{1}[p]$ it follows that $A_{1}\cong C_{1}$
and so $A_{2}\cong C_{2}$.

\textbf{3)} Let $A_{2}+C_{2}\neq D$. Then as in case~\textbf{1)}, $%
D=A_{2}\oplus V=C_{2}\oplus V$ for some $V$ and so $A=A_{2}\oplus (A\cap V)$
and $C=C_{2}\oplus (C\cap V)$. Here $V$ has rank $\leq n$ and $A\cap V$, $%
C\cap V$ are isomorphic summands, so by induction $V=(A\cap V)\oplus
L=(C\cap V)\oplus L$ for some $L$. Finally $D=[A_{2}\oplus (A\cap V)]\oplus
L=[C_{2}\oplus (C\cap V)]\oplus L$.

\textbf{4)} Let $A_{2}+C_{2}=D$. Since $A_{2}\cap C_{2}=0$, as in case 
\textbf{2 a)}, $D=A_{2}\oplus M=C_{2}\oplus M$ for some $M$. Since $r(M)\leq
n$, by induction, $A\cap M$ and $C\cap M$ are perspective in $M$, and as in
case \textbf{3)}, $A$ and $C$ are perspective in $D$.
\end{proof}

If $m$ is a cardinal and $G$ is a group then $G^{(m)}$ denotes the direct
sum of $m$ copies of $G$.

The previous proposition has the following

\begin{corollary}
\label{0.1}For any cardinal $m$ and any natural number $n$, the group $G=%
\mathbb{Z}(p^{n})^{(m)}$ is perspective iff $m$ is finite.
\end{corollary}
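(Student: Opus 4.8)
We argue the two implications separately. For the forward direction, suppose $m$ is infinite. Splitting the index set into a countable part and a remainder shows that $G$ has a summand isomorphic to $\mathbb{Z}(p^{n})^{(\aleph_{0})}$. Taking $N=\mathbb{Z}(p^{n})$ in the Nonexample of Subsection~3.1 --- here $N\ncong N\oplus N$ because $|N|=p^{n}\neq p^{2n}=|N\oplus N|$ --- shows that $\mathbb{Z}(p^{n})^{(\aleph_{0})}$ is not even \textsl{IC}, hence not perspective. By Proposition~\ref{dir} a group with a non-perspective summand cannot be perspective, so $G$ is not perspective. This mirrors the infinite-rank half of Proposition~\ref{pgru}.

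For the converse, assume $m$ is finite and set $S=\mathbb{Z}_{p^{n}}$. Since $p^{n}G=0$, the $S$-action factors through $\mathbb{Z}$, so every subgroup of $G$ is automatically an $S$-submodule, every group decomposition is an $S$-module decomposition, and thus $\mathbb{Z}$-summands and $S$-summands of $G$ coincide; in particular $G\cong S^{m}$ as an $S$-module and $\mathrm{End}(G)\cong \mathrm{End}_{S}(S^{m})\cong \mathrm{M}_{m}(S)$. As $S$ is a finite local ring, every finitely generated projective $S$-module is free, so each summand of $S^{m}$ is free; hence whenever $G=A\oplus B=C\oplus U$ with $A\cong C$ we have $A\cong C\cong \mathbb{Z}(p^{n})^{(k)}$. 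So proving $G$ perspective amounts to showing that the free module $S^{m}$ is a perspective $S$-module.

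The plan for the crux is to pass to the residue field. Let $\mathfrak{m}=pS$ be the maximal ideal of $S$ and write $\overline{X}=X/\mathfrak{m}X$, so $\overline{S^{m}}=\mathbb{F}_{p}^{m}$. Given free summands $A,C$ of $S^{m}$ with $A\cong C$, their reductions $\overline{A},\overline{C}$ are subspaces of $\mathbb{F}_{p}^{m}$ of the same dimension $k$. Because finite-dimensional vector spaces over any field are perspective (equivalently, by Theorem~\ref{3.4}, the ring $\mathrm{M}_{k}(\mathbb{F}_{p})$ is perspective), there is a subspace $\overline{Z}\leq \mathbb{F}_{p}^{m}$ that is a common complement of $\overline{A}$ and $\overline{C}$. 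Lift a basis of $\overline{Z}$ to unimodular vectors spanning a free summand $Z$ of $S^{m}$. For free summands the natural map $A\oplus Z\to S^{m}$ is an isomorphism iff its reduction $\overline{A}\oplus \overline{Z}\to \mathbb{F}_{p}^{m}$ is one, by Nakayama's lemma; since $\overline{Z}$ complements both $\overline{A}$ and $\overline{C}$, the lift $Z$ is a common complement of $A$ and $C$, which is exactly perspectivity of $S^{m}$.

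The main obstacle is this final transport step: descending to and lifting from the residue field. Reduction and lifting are harmless for projective modules, so the substantive inputs are the classical perspectivity of $\mathbb{F}_{p}$-vector spaces together with Nakayama's lemma; once these are in place the argument is routine. The one point requiring care is the small-field case $p=2$, where $\mathbb{F}_{p}$ has few subspaces, but a direct count shows that the number of $(m-k)$-dimensional complements avoiding two fixed $k$-dimensional subspaces of $\mathbb{F}_{p}^{m}$ is always positive, so the common complement $\overline{Z}$ does exist.
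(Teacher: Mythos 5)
Your proof is correct, but the sufficiency half takes a genuinely different route from the paper's. The paper goes \emph{up}: it embeds $G$ in its divisible hull $D$, observes $G=D[p^{n}]$, applies Proposition~\ref{pgru} to produce a common complement $U$ in $D$ for the divisible hulls $D_{A}$, $D_{C}$ of the two isomorphic summands, and then cuts back down to the socle layer, $G=A\oplus U[p^{n}]=C\oplus U[p^{n}]$. You go \emph{down}: treating $G$ as the free module $S^{m}$ over the finite local ring $S=\mathbb{Z}_{p^{n}}$, you reduce modulo $\mathfrak{m}=pS$, invoke the classical common-complement property of equal-dimensional subspaces of $\mathbb{F}_{p}^{m}$, and lift the complement back via Nakayama (surjectivity from Nakayama, injectivity from the rank or cardinality count $|A\oplus Z|=|S^{m}|$, which you rightly call routine). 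Both are transport arguments along an equivalence-preserving functor --- the paper's along $D\mapsto D[p^{n}]$ from divisible groups, yours along $X\mapsto X/pX$ to the residue field --- but yours is logically independent of Proposition~\ref{pgru} (whose proof is a nontrivial multi-case induction) and actually proves the stronger statement that $S^{m}$ is a perspective module over any commutative local ring $S$; the paper's version buys economy, since Proposition~\ref{pgru} is needed anyway and the corollary then falls out in four lines. Three small points in your write-up: the fact that $\overline{A}$ and $\overline{C}$ embed as subspaces of $\mathbb{F}_{p}^{m}$, i.e. $A\cap pS^{m}=pA$, holds precisely because $A$ is a summand and deserves a line; the relevant matrix ring in your appeal to Theorem~\ref{3.4} is $\mathrm{M}_{m}(\mathbb{F}_{p})$ (endomorphisms of the ambient space), not $\mathrm{M}_{k}(\mathbb{F}_{p})$; and the closing worry about $p=2$ is vacuous, since two subspaces of equal dimension in a finite-dimensional space admit a common complement over every field, with no counting or case distinction required. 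The necessity half (the Nonexample with $N=\mathbb{Z}(p^{n})$ plus Proposition~\ref{dir}) coincides with the paper's.
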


\begin{proof}
As already mentioned, an infinite rank direct sum of $\mathbb{Z}(p^{n})$ is
not perspective (see nonexample in the preceding section). Conversely, if $D$
is a divisible hull of $G$ then $G=D[p^{n}]$. If $G=A\oplus B=C\oplus K$, $%
A\cong C$, then by Proposition~\ref{pgru}, $D=D_{A}\oplus U=D_{C}\oplus U$
for some $U\leq D$, where $D_{A}$, $D_{C}$ are the divisible hulls of $A$, $%
C $, respectively. So $G=D[p^{n}]=D_{A}[p^{n}]\oplus
U[p^{n}]=D_{C}[p^{n}]\oplus U[p^{n}]$, where $D_{A}[p^{n}]=A$, $%
D_{C}[p^{n}]=C$.
\end{proof}

Next, about finite or bounded $p$-groups, we have

\begin{proposition}
\label{fini}The finite $p$-groups are perspective.
\end{proposition}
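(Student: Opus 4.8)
The plan is to argue through the endomorphism ring rather than directly with subgroups. The case analysis used for divisible groups in Proposition \ref{pgru} relied on $A+C$ being a summand (automatic for a sum of divisible subgroups), and this fails for finite groups, so a different mechanism is needed. By Theorem \ref{3.4} perspectivity is an \textbf{ER}-property, hence it suffices to prove that $R=\mathrm{End}(G)$ is a perspective ring. Since $G$ is finite, $R$ is a finite ring, and therefore Artinian: its Jacobson radical $J=J(R)$ is nilpotent and $R/J$ is semisimple. The strategy is to handle $R/J$ by its Wedderburn decomposition and then lift perspectivity from $R/J$ to $R$ along the nilpotent ideal $J$.

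First I would dispose of the semisimple quotient. By Wedderburn--Artin, $R/J\cong\prod_{i}M_{n_{i}}(D_{i})$ with each $D_{i}$ a (finite) division ring. Each factor $M_{n_{i}}(D_{i})$ is the endomorphism ring of a finite-dimensional $D_{i}$-vector space $V$, and such a ring is perspective because any two subspaces $A,C\leq V$ of equal dimension admit a common complement: complement $A+C$ in $V$, and inside $A+C$ insert the diagonal (Theorem \ref{dia}) of an isomorphism between complements of $A\cap C$ in $A$ and in $C$. By Corollary \ref{prod}, products of perspective rings are perspective, so $R/J$ is perspective.

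The heart of the argument, and the step I expect to be the main obstacle, is the lifting lemma: \emph{if $J$ is nil and $R/J$ is perspective, then $R$ is perspective}. I would verify this through condition (4) of Theorem \ref{3.4}. Given $e^{2}=e$ and $r,s\in R$ with $erse=e$, reduce modulo $J$ and apply perspectivity of $R/J$ to obtain $\overline{t}$ with $\overline{e}\,\overline{r}\,\overline{t}\,\overline{e}=\overline{e}$ and $\overline{e}\,\overline{t}\,\overline{e}\in U(\overline{e}(R/J)\overline{e})$. Lift $\overline{t}$ to some $t_{0}\in R$. Using $J(eRe)=eJe$ together with the fact that in any ring an element that is a unit modulo the radical is already a unit, one gets that $u:=ert_{0}e=e+j$ (with $j\in eJe$) and $et_{0}e$ are both units of the corner ring $eRe$. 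Replacing $t_{0}$ by $t=t_{0}eu^{-1}$, the inverse taken in $eRe$, then yields $erte=(ert_{0}e)u^{-1}=uu^{-1}=e$ and $ete=(et_{0}e)u^{-1}\in U(eRe)$, which is precisely condition (4) for $R$.

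Combining the two steps shows $R=\mathrm{End}(G)$ is perspective, hence $G$ is perspective. The delicate points are all in the lifting lemma: identifying $J(eRe)=eJe$, checking that the corrected element $t$ lands in the right one-sided components of the corner ring, and confirming the unit computations; by contrast, the structure theory of finite abelian $p$-groups is not needed beyond the single fact that $\mathrm{End}(G)$ is finite.
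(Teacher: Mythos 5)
Your proof is correct, but it takes a genuinely different route from the paper's. The paper stays inside the group and inducts on $|G|$: it splits off a summand $G_{1}$ that is a direct sum of cyclics of the maximal order $p^{n}$, uses that $G_{1}$ is an absolute direct summand and that in a direct sum of cyclic groups every subsocle supports a summand to write $A=A_{1}\oplus A_{2}$, $C=C_{1}\oplus C_{2}$ with $A_{1}[p]=(A\cap G_{1})[p]$ and $C_{1}[p]=(C\cap G_{1})[p]$, finds a common complement for $A_{1}$, $C_{1}$ inside $G_{1}$ via the homocyclic case (Corollary \ref{0.1}, itself deduced from the divisible case, Proposition \ref{pgru}), and finishes by induction on $U\oplus G_{2}$, which has smaller order. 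You instead work in $R=\mathrm{End}(G)$, and every step checks out: the common-complement fact for equal-dimensional subspaces (your diagonal construction, in the spirit of Theorem \ref{dia}) makes each Wedderburn factor of $R/J$ perspective, Corollary \ref{prod} handles the product, and the lifting step is sound because for an idempotent $e$ one has $eJe=J\cap eRe\subseteq J(eRe)$ (with equality when $J=J(R)$) and $eRe/eJe\cong \bar{e}(R/J)\bar{e}$, so both $u=ert_{0}e$ and $et_{0}e$ are units of the corner ring, and $t=t_{0}eu^{-1}$ verifies condition (4) of Theorem \ref{3.4}; note, as you implicitly exploit, that no lifting of idempotents is required, since the given $e$ is only reduced modulo $J$, never lifted (one small correction: for a general nil ideal $J$ your equality $J(eRe)=eJe$ should be the inclusion $eJe\subseteq J(eRe)$, which is all the argument uses, and equality does hold in your application $J=J(R)$). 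As for what each approach buys: yours proves strictly more --- any module with semilocal endomorphism ring is perspective, so all finite abelian groups are covered at once without the primary reduction of Corollary \ref{tor}; in fact your lifting lemma could be bypassed entirely, since finite rings are semilocal, semilocal rings have stable range one, and rings with stable range one are perspective by \cite{gar}. The paper's argument buys elementarity --- it stays within Fuchs-style abelian group theory --- and, more importantly, its socle-matching technique is reused afterwards: Proposition \ref{FP} and the torsion-complete result are built directly on the statement and method of Proposition \ref{fini}, which a purely ring-theoretic proof would not supply.
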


\begin{proof}
The proof goes by induction on the order $|G|$ of the group $G$. Let $%
G=G_{1}\oplus G_{2}$, where $G_{1}$ is a direct sum of finitely many groups $%
\mathbb{Z}(p^{n})$, where $p^{n}$ is the maximal order of elements in $G$, $%
G=A\oplus B=C\oplus K$ and $A\cong C$.

By Corollary \ref{0.1} $G_{1}$ is perspective. Note that if $A\cap G_{1}\neq
0$ then also $C\cap G_{1}\neq 0$. Otherwise, if $C\cap G_{1}=0$, since $%
G_{1} $ is an absolute direct summand~(see \cite{fuc1}, Exercise 8 of \S 9)
of $G$, we can suppose that $C\leq G_{2}$. However, in this case $C$ would
not have any element of order $p^{n}$ but in $A$ such elements exist in view
of $A\cap G_{1}\neq 0$. This would contradict the isomorphism $A\cong C$.

Since in a direct sum of cyclic groups each subsocle supports some summand
of this group~(see \cite{fuc2}, Exercise 3 of \S 66) it follows that $%
A=A_{1}\oplus A_{2}$, $C=C_{1}\oplus C_{2}$, where $A_{1}[p]=(A\cap
G_{1})[p] $, $C_{1}[p]=(C\cap G_{1})[p]$. These direct decompositions of
cyclic groups are isomorphic, so from $A\cong C$ it follows that $A_{1}\cong
C_{1}$ and so $A_{2}\cong C_{2}$. Hence by Corollary \ref{0.1}, $%
G_{1}=A_{1}\oplus U=C_{1}\oplus U$ for some $U$. Then $A=A_{1}\oplus A_{3}$, 
$C=C_{1}\oplus C_{3}$, where $A_{3}=[A\cap (U\oplus G_{2})]$, $C_{3}=[C\cap
(U\oplus G_{2})] $. So $A_{3}$, $C_{3}$ are isomorphic direct summands in $%
U\oplus G_{2}$. Since $|U\oplus G_{2}|<|G|$, by induction $U\oplus
G_{2}=A_{3}\oplus V=C_{3}\oplus V$. Hence $G=G_{1}\oplus G_{2}=(A_{1}\oplus
U)\oplus G_{2}=(A_{1}\oplus A_{3})\oplus V=(C_{1}\oplus C_{3})\oplus V$,
where $A_{1}\oplus A_{3}=A$ and $C_{1}\oplus C_{3}=C$, as desired.
\end{proof}

Since summands of perspective groups are perspective it follows from the
nonexample mentioned before that the Ulm-Kaplanski invariants $f_{n}(G)$ of
perspective reduced $p$-groups $G$ are finite for all integer $n\geq 0$.
Thus a basic subgroup of $G$ is countable and so $|G|\leq 2^{\aleph _{0}}$~%
\cite{fuc1}. Therefore

\begin{corollary}
Any perspective bounded $p$-group is finite.
\end{corollary}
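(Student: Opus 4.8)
The plan is to combine the finiteness of the Ulm--Kaplansky invariants, already established in the paragraph preceding the statement, with Pr\"ufer's structure theorem for bounded groups. The corollary is really a matter of assembling these two ingredients, so the work is in recognizing which structural facts to invoke.

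First I would record that a bounded $p$-group $G$ is automatically reduced. The only divisible $p$-groups are direct sums of copies of $\mathbb{Z}(p^{\infty })$, and $\mathbb{Z}(p^{\infty })$ contains elements of unbounded order; hence a bounded $p$-group has trivial divisible part. Consequently $G$ is a \emph{reduced} bounded $p$-group, and the conclusion of the preceding discussion applies verbatim: since summands of perspective groups are perspective (Proposition \ref{dir}) and infinite direct sums $\mathbb{Z}(p^{n})^{(m)}$ fail to be perspective (Corollary \ref{0.1}), every Ulm--Kaplansky invariant $f_{k}(G)$ is finite.

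Next, since $G$ is bounded, say $p^{n}G=0$, Pr\"ufer's theorem gives a decomposition into cyclic summands $G\cong \bigoplus_{k=1}^{n}\mathbb{Z}(p^{k})^{(m_{k})}$, where each multiplicity $m_{k}$ equals the invariant $f_{k-1}(G)$. Because all $m_{k}=f_{k-1}(G)$ are finite and the index $k$ ranges over the finite set $\{1,\dots ,n\}$, the group $G$ is a finite direct sum of finite cyclic groups, hence finite.

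The only step requiring genuine attention is the finiteness of the invariants, and this is exactly what Proposition \ref{dir} together with Corollary \ref{0.1} supply: were some $f_{k}(G)$ infinite, Pr\"ufer's theorem would exhibit an infinite direct sum $\mathbb{Z}(p^{k+1})^{(m)}$ as a summand of $G$, contradicting simultaneously the perspectivity of summands and the non-perspectivity of such infinite sums. Thus I expect no real obstacle beyond boundedness forcing reducedness and a finite spread of orders; once those are noted, the corollary follows immediately.
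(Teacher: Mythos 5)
Your proof is correct and follows essentially the same route as the paper, which leaves the corollary as an immediate consequence of the preceding paragraph: finiteness of the Ulm--Kaplansky invariants of a perspective reduced $p$-group (via Proposition \ref{dir} and the nonexample behind Corollary \ref{0.1}), combined with Pr\"ufer's theorem that a bounded group is a direct sum of cyclic groups of boundedly many orders. You merely make explicit the steps the paper treats as implicit (reducedness of bounded groups and the identification of the multiplicities with the invariants $f_{k-1}(G)$), which is fine.
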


For (Abelian) groups we can introduce the following

\textbf{Definition}. A group is called \textsl{finitely perspective} if it
is perspective with respect to finite (direct) summands. Then we can prove a
surprising (specific for Abelian groups) result.

\begin{proposition}
\label{FP}Each $p$-group $G$ is finitely perspective.
\end{proposition}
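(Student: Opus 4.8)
The plan is to exploit that a finite summand $A$ has a \emph{finite} endomorphism ring and to feed this into the corner-ring criterion of Theorem~\ref{3.4}, rather than to hunt for an explicit common complement. Fix a finite summand $A$ of $G$ and let $C$ be any summand with $C\cong A$; write $G=A\oplus B=C\oplus K$. Passing to $R=\operatorname{End}(G)$, let $e=e^{2}\in R$ be the projection onto $A$ along $B$, so that the corner ring $eRe\cong\operatorname{End}(A)$ is finite, and let $f=f^{2}$ be the projection onto $C$. By the remark in Subsection~3.1 (equivalently Proposition~3.10 of \cite{KMT}), $A\cong C$ means $Re\cong Rf$ as left $R$-modules, and the existence of a common complement for $A$ and $C$ is precisely the instance of condition~(4) of Theorem~\ref{3.4} attached to the idempotent $e$.

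Thus it suffices to verify condition~(4) for every idempotent $e\in R$ whose corner $eRe$ is finite. This is where finiteness does all the work: a finite ring is Artinian, hence semilocal, hence of stable range one, and for a corner ring of stable range one the implication ``$erse=e\Rightarrow erte=e$ for some $t$ with $ete\in U(eRe)$'' holds. In module language this is the statement that a module whose endomorphism ring has stable range one has the substitution property, so the finite module $A$ substitutes: any two summands of any ambient module that are isomorphic to $A$ possess a common complement. Applying this with the ambient module $G$ and the summands $A,C$ yields the desired common complement, which is exactly finite perspectivity. I note that neither primariness nor reducedness enters the argument: it shows that \emph{every} abelian group is perspective with respect to its finite summands, the $p$-group case being the one used later.

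The step I expect to be the main obstacle is the passage ``finite corner $\Rightarrow$ condition~(4)''. Concretely, one must reduce the identity $erse=e$ to the finite ring $S=eRe$ and there produce $t$ with $erte=e$ and $ete\in U(S)$; this is a stable-range-one manipulation inside $S$, combined with the fact that a one-sided invertible element of a finite (hence semilocal) ring is already invertible. A purely group-theoretic alternative would be to enclose the finite subgroup $A+C$ in a finite summand $F$ of $G$, split off $F$, and apply Proposition~\ref{fini} inside $F$; this works immediately when $G$ is separable, but it breaks down when $G^{1}\neq 0$, since $A+C$ may then meet the first Ulm subgroup and fail to lie in any finite summand. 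The ring-theoretic route via Theorem~\ref{3.4} bypasses this difficulty entirely, which is why I would organise the proof around the finiteness of $eRe$ rather than around a finite summand containing $A$ and $C$.
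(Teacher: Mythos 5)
Your argument is correct in substance and takes a genuinely different route from the paper. The paper works inside the group: choosing $m$ with $p^{m}(A+C)=0$, it places $A+C$ inside a $p^{m}G$-high subgroup $H$ (a summand and a direct sum of cyclics, by Khabbaz's theorem), traps $A+C$ in a finite summand $H^{\prime }$ of $H$, applies Proposition \ref{fini} inside $H^{\prime }$, and enlarges the resulting complement by the complements of $H^{\prime }$ in $G$. You instead work in $R=\mathrm{End}(G)$: the corner $eRe\cong \mathrm{End}(A)$ is finite, hence semilocal, hence of stable range one, and condition (4) of Theorem \ref{3.4} does hold at such an $e$: given $erse=e$, write $erse=r^{\prime }s^{\prime }+r^{\prime \prime }s^{\prime \prime }$ with $r^{\prime }=ere$, $r^{\prime \prime }=er(1-e)$, $s^{\prime }=ese$, $s^{\prime \prime }=(1-e)se$; since $r^{\prime }S+(r^{\prime \prime }s^{\prime \prime })S=S$ in $S=eRe$, stable range one yields $g\in S$ with $v=r^{\prime }+r^{\prime \prime }s^{\prime \prime }g\in U(S)$, and $t=v^{-1}+s^{\prime \prime }gv^{-1}$ gives $erte=e$ with $ete=v^{-1}\in U(S)$ (note $t$ is \emph{not} inside the corner, so ``inside $S$'' in your sketch is a slight overstatement). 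Two points need tightening. First, Theorem \ref{3.4} as quoted is a \emph{global} equivalence; what you use is the instance-wise correspondence --- a given pair $A=eG$, $C\cong A$ produces $r,s$ with $erse=e$, and the $t$ supplied by (4) yields a common complement for that very pair --- which is indeed how the proof in \cite{gar} runs, but should be stated or cited explicitly. Second, your appeal to substitution is misstated: substitution for $A$ applied to $G=A\oplus B=C\oplus K$ produces $D$ with $G=D\oplus B=D\oplus K$, a common complement for $B$ and $K$, not for $A$ and $C$; since the relation $\sim $ is not transitive, $A\sim D\sim C$ does not give $A\sim C$. What closes the argument is the corner computation above (equivalently: with $\sigma $ the $A$-component of an isomorphism $A\rightarrow C$ and $q$ the projection along $K$, one gets $\tau \sigma +\gamma =1$ in $\mathrm{End}(A)$, stable range one lets one replace $B$ by a graph complement $B^{\prime }$ over which the $A$-component becomes an automorphism, and then $G=A\oplus B^{\prime }=C\oplus B^{\prime }$), and your main route does supply exactly this.

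A further remark: your stated reason for distrusting the group-theoretic alternative is well taken, and in fact it applies to the paper's own first step. The claim ``$A+C\leq H$ for a $p^{m}G$-high subgroup $H$'' requires $(A+C)\cap p^{m}G=0$; this holds for $A$ and for $C$ separately (they are summands), but can fail for $A+C$. For example, let $G=\langle a\rangle \oplus B$ with $o(a)=p$ and $B$ a reduced $p$-group with an element $x\neq 0$ of order $p$ in $B^{1}=\bigcap_{n}p^{n}B$ (e.g., Pr\"{u}fer's group $H_{\omega +1}$). Then $A=\langle a\rangle $ and $C=\langle a+x\rangle $ are isomorphic summands ($a+x$ has order $p$ and height $0$, so generates a pure, hence direct, summand), yet $x\in (A+C)\cap p^{m}G$ for every $m$, so $A+C$ lies in no $p^{m}G$-high subgroup and in no finite summand; in this example $B$ itself happens to be a common complement. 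So your ring-theoretic route, besides proving more --- it shows that \emph{every} abelian group is perspective with respect to its finite summands --- circumvents a genuine difficulty in the nonseparable case that the paper's enclosure step glosses over.
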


\begin{proof}
Let $A\cong C$ be finite summands of $G$. Then $p^{m}A=0$ for some integer $%
m\geq 1$, and so also $p^{m}C=0$ and $p^{m}(A+C)=0$. Hence $A+C\leq H$ for a 
$p^{m}G$-high subgroup $H$. By a theorem of Khabbaz (see \cite{fuc1},
Theorem 27.7), $H$ is a summand of $G=H\oplus F$. We have $H=H_{1}\oplus
\dots \oplus H_{m}$, where $H_{i}$ is a direct sum of groups $\mathbb{Z}%
(p^{i})$ whenever $H_{i}\neq 0$. Let $\pi _{i}G\rightarrow H_{i}$ be the
projections for $i=1,\dots ,m$. Since $A+C$ is finite, it follows that each $%
\pi _{i}(A+C)$ is finite, and $A+C\leq \pi _{1}(A+C)\oplus \dots \oplus \pi
_{m}(A+C)$. Each $\pi _{i}(A+C)$ is contained in some finite summand $%
H_{i}^{\prime }$ of $H_{i}$ and so $H^{\prime }=H_{1}^{\prime }\oplus \dots
\oplus H_{m}^{\prime }$ is a finite summand in $H=H^{\prime }\oplus
H^{\prime \prime }$. By Proposition \ref{fini}, $H^{\prime }=A\oplus
U=C\oplus U$ for some $U$. Then $G=A\oplus (U\oplus H^{\prime \prime }\oplus
F)=C\oplus (U\oplus H^{\prime \prime }\oplus F)$.
\end{proof}

We just mention that if $G=A\oplus C$, where $A\cong C$ then $G=A\oplus
U=C\oplus U$, for any diagonal $U$ with respect to $A$ and $C$.

\bigskip 

Let $\mathfrak{A}$ be a class of (Abelian) groups and $G\in \mathfrak{A}$. A
relativization of our main property can be defined.

\textbf{Definition}. We call $A$ \textsl{perspective in class} $\mathfrak{A}$
if for $G=A\oplus B=C\oplus K$, where $A\cong C$ and $G\in \mathfrak{A}$ it
follows that $G=A\oplus U=C\oplus U$ for some $U$.

Then we can prove a result on torsion-complete (for several equivalent
definitions for reduced groups, see \cite{fuc2}, Theorem 68.4) $p$-groups

\begin{proposition}
The torsion-complete $p$-groups $A$ with finite Ulm-Kaplanski invariants are
perspective in the class of separable $p$-groups.
\end{proposition}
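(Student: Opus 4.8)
The plan is to reduce the statement to an approximation-and-limit argument built on finite perspectivity (Proposition \ref{FP}), exploiting that the finite Ulm--Kaplanski invariants make the basic subgroup of $A$ countable and that torsion-completeness supplies the completeness needed to pass to the limit. First I would dispose of the bounded case: if only finitely many $f_{n}(A)$ are nonzero then $A$ is finite and Proposition \ref{fini} applies directly, so I may assume $A$ is unbounded. Fix an isomorphism $\phi\colon A\to C$ and a basic subgroup $B_{A}=\bigoplus_{n}\mathbb{Z}(p^{n})^{f_{n}(A)}$ of $A$. For each $r$ the bounded truncation $D_{r}=\bigoplus_{n\le r}\mathbb{Z}(p^{n})^{f_{n}(A)}$ is a finite summand of $A$, giving $A=D_{r}\oplus A_{r}$ with $A_{r}$ again torsion-complete (summands of torsion-complete groups are torsion-complete) and, crucially, \emph{deep}: since the shortest cyclic in a basic subgroup of $A_{r}$ has order $p^{r+1}$, one gets $A_{r}[p]\le p^{r}A$. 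Applying $\phi$ yields $C=D_{r}'\oplus C_{r}$ with $D_{r}'=\phi(D_{r})$ finite, $D_{r}'\cong D_{r}$, and $C_{r}=\phi(A_{r})$ equally deep, $C_{r}[p]\le p^{r}C$.

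It then suffices to produce $\theta\in\operatorname{Aut}(G)$ with $\theta|_{B}=\mathrm{id}$ for a fixed complement $B$ of $A$ (so $G=A\oplus B$) and $\theta(A)=C$; for then $G=\theta(G)=\theta(A)\oplus\theta(B)=C\oplus B$, so $U=B$ is a common complement. I would construct $\theta$ as a limit of partial products $\theta_{r}\cdots\theta_{1}$, where $\theta_{r}$ carries the finite piece $D_{r}$ onto its counterpart inside $C$ and is the identity modulo high heights. Each $\theta_{r}$ is obtained exactly as in the proof of Proposition \ref{FP}: the relevant finite subgroup sits inside a $p^{m}G$-high summand $H$ of $G$ which is a direct sum of cyclics, inside which a finite summand $H'$ containing the pieces is perspective by Proposition \ref{fini}, and the resulting common complement furnishes an automorphism equal to the identity off a finite summand. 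Using the deepness $A_{r}[p]\le p^{r}A$, $C_{r}[p]\le p^{r}C$ together with the separability of $G$ (so $\bigcap_{n}p^{n}G=0$ and heights tend to $\infty$), I would arrange $\theta_{r}\equiv\mathrm{id}$ modulo $p^{r-1}G$ on the parts already matched, so that the partial products form a height-Cauchy sequence.

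For each $a\in A$ the sequence $\theta_{r}\cdots\theta_{1}(a)$ is then height-Cauchy with its tail lying in $C$; since $C$ is torsion-complete it converges there, defining $\theta(a)\in C$, while $\theta$ is the identity on $B$, so $\theta$ is defined on all of $G=A\oplus B$ even though $G$ itself need not be complete. The same estimates applied to the inverses show $\theta$ is bijective, and the construction forces $\theta(A)=C$. The main obstacle is precisely this convergence bookkeeping: one must choose the finite-level automorphisms coherently so that each new $\theta_{r}$ disturbs only heights $\ge r-1$ and does not undo the matching achieved earlier, and then verify that the limit map is a genuine automorphism with $\theta(A)=C$ (equivalently, that $U=B$ is pure and complements both $A$ and $C$). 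This is where both hypotheses are essential: finiteness of the $f_{n}(A)$ keeps every stage finite, so Proposition \ref{FP} is available, and torsion-completeness of $A$ and $C$ provides the completeness in which the height-Cauchy limits actually live, while separability of $G$ guarantees that the heights controlling the approximation increase without bound.
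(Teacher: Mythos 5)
There is a genuine gap, and it occurs at the very first reduction, before any of the convergence bookkeeping: you claim it suffices to produce $\theta \in \mathrm{Aut}(G)$ with $\theta|_{B}=\mathrm{id}$ and $\theta(A)=C$, so that $U=B$ is a common complement. While such a $\theta$ would indeed suffice, no such $\theta$ can exist in general, because $B$ need not be a complement of $C$ at all. Take $G=\mathbb{Z}(p)\oplus \mathbb{Z}(p)$ with $A$ the first factor, $B$ the second factor, and $C=B$ (with $K=A$); here $A$ is torsion-complete with finite Ulm-Kaplanski invariants and $G$ is separable, yet $C\cap B=C\neq 0$, so $G\neq C\oplus B$, and any automorphism fixing $B$ pointwise sends $A$ to a complement of $B$, hence never to $C$. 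The only common complement in this example is a diagonal, which your scheme can never produce since you have committed to $U=B$ from the outset. Nor can you salvage the plan by dropping the requirement $\theta|_{B}=\mathrm{id}$ and merely asking for $\theta(A)=C$: automorphism-equivalence of summands does not imply a common complement, as the example of Remark \ref{5.13} shows, where an automorphism of $\mathbb{Z}^{2}$ carries $(1,0)\mathbb{Z}$ onto $(2,5)\mathbb{Z}$ although these summands have no common complement. So the complement $U$ must be constructed as a new subgroup (diagonal-like), and your limit construction, even if all the height estimates were carried through, converges toward an object that cannot exist whenever $C\cap B\neq 0$.

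Your local ingredients are sound and close in spirit to the paper's: the truncations $D_{r}$ of a basic subgroup are finite summands by the finiteness of the $f_{n}(A)$, Propositions \ref{fini} and \ref{FP} handle each finite stage, and coherence of the stage-by-stage choices is indeed the crux. But the paper resolves the limit problem differently and in a way that sidesteps your obstruction: it applies Proposition \ref{FP} to the finite pieces $A_{1}\oplus\dots\oplus A_{n}$ and $C_{1}\oplus\dots\oplus C_{n}$ coming from the basic subgroups of $A$ and $C$, obtaining common complements $U^{(n)}$ chosen \emph{nested} (with $U^{(n+1)}$ a summand of $U^{(n)}$ and $U^{(n)}/U^{(n+1)}$ a direct sum of cyclics of order $p^{n+1}$); this yields a basic subgroup of $G$ of the form $(\bigoplus_{n}A_{n})\oplus(\bigoplus_{n}V_{n}^{(n)})=(\bigoplus_{n}C_{n})\oplus(\bigoplus_{n}V_{n}^{(n)})$. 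The passage to the limit is then done not by a height-Cauchy automorphism argument but by the structure theory of torsion-complete groups (Theorem 71.3 of Fuchs, vol.\ 2): the decomposition of the basic subgroup lifts to $\overline{G}=A\oplus U=C\oplus U$ with $U=\overline{(\bigoplus_{n}V_{n}^{(n)})}$, using that $A$ and $C$, being torsion-complete, coincide with the completions of their basic subgroups; finally $G=A\oplus(G\cap U)=C\oplus(G\cap U)$. To repair your argument you would have to abandon $U=B$ and instead make your stagewise maps build exactly such a nested system of finite common complements, at which point you are essentially reconstructing the paper's proof.
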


\begin{proof}
Let $G=A\oplus B=C\oplus K$, where $A\cong C$ and let $G$ be a separable $p$%
-group. Then $G=(A_{1}\oplus \dots \oplus A_{n})\oplus (A_{n}^{\ast }\oplus
B)=(C_{1}\oplus \dots \oplus C_{n})\oplus (C_{n}^{\ast }\oplus K)$, where $%
A=(A_{1}\oplus \dots \oplus A_{n})\oplus A_{n}^{\ast }$, $C=(C_{1}\oplus
\dots \oplus C_{n})\oplus C_{n}^{\ast }$, and $A_{1}\oplus \dots \oplus A_{n}
$, $C_{1}\oplus \dots \oplus C_{n}$ respectively, are summands of the basic
subgroups of $A$, $C$ ($A_{k}$, $C_{k}$ are direct sums of cyclic groups of
order $p^{k}$). By Proposition \ref{FP}, $G=(A_{1}\oplus \dots \oplus
A_{n})\oplus U^{(n)}$, $G=(C_{1}\oplus \dots \oplus C_{n})\oplus U^{(n)}$,
where we can choose the $U^{(n)}$'s, such that $U^{(n+1)}$ is a summand in $%
U^{(n)}$ and $U^{(n)}/U^{(n+1)}$ is a direct sum of cyclic groups of order $%
p^{n+1}$. So $G$ has a basic subgroup of type $(\bigoplus_{n\geq
1}A_{n})\oplus (\bigoplus_{n\geq 1}V_{n}^{(n)})=(\bigoplus_{n\geq
1}C_{n})\oplus (\bigoplus_{n\geq 1}V_{n}^{(n)})$, where $V_{n}^{(n)}$ is a
summand in $U_{n}^{(n)}$, each $U_{n}^{(n)}$ is a summand in $U^{(n)}$ and
is a direct sum of cyclic groups of order $p^{n}$. So by \cite{fuc2},Theorem
71.3, $\overline{G}=A\oplus U=C\oplus U$, where $\overline{X}$ is the
torsion completion of $X$ and $A=\overline{(\bigoplus_{n\geq 1}A_{n})}$, $C=%
\overline{(\bigoplus_{n\geq 1}C_{n})}$, $U=\overline{(\bigoplus_{n\geq
1}V_{n}^{(n)})}$. Hence $G=A\oplus (G\cap U)=C\oplus (G\cap U)$, and the
proof is complete.
\end{proof}

\subsection{Perspective torsion-free groups}

As it is well known, the divisible torsion-free groups are the direct sums
of $\mathbb{Q}$.

\begin{proposition}
\label{divtf}A torsion-free divisible group is perspective iff it has finite
rank. As such, it is isomorphic to a finite direct sum of $\mathbb{Q}$.
\end{proposition}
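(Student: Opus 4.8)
The plan is to reduce the whole statement to linear algebra over $\mathbb{Q}$. A torsion-free divisible group is a $\mathbb{Q}$-vector space, its (direct) summands are exactly its $\mathbb{Q}$-subspaces, and two summands are isomorphic precisely when they have the same rank, i.e. the same dimension over $\mathbb{Q}$. For the forward implication, if $G$ has infinite rank then $\mathbb{Q}^{(\aleph_0)}$ is a summand of $G$; since $\mathbb{Q}\ncong \mathbb{Q}\oplus \mathbb{Q}$, the nonexample of Subsection 3.1 shows $\mathbb{Q}^{(\aleph_0)}$ is not perspective, whence $G$ is not perspective by Proposition \ref{dir}. So it remains to prove that a finite-dimensional $\mathbb{Q}$-vector space $G$ is perspective, that is, that any two equidimensional subspaces $A,C$ admit a common complement.

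The core is the following construction, which extends the diagonal trick already used (Theorem \ref{dia} and the remark following Proposition \ref{FP}) from the disjoint case to the general one. Put $D=A\cap C$ and write $A=D\oplus A'$, $C=D\oplus C'$; then $A'\cap C=C'\cap A=A'\cap C'=0$, so $A+C=D\oplus A'\oplus C'$ with $\dim A'=\dim C'$. Extend this to a decomposition $G=D\oplus A'\oplus C'\oplus E$, where $E$ is any complement of $A+C$. Fixing an isomorphism $g:A'\rightarrow C'$ (which exists since $\dim A'=\dim C'$), form the diagonal $\Delta=(1+g)(A')=\{a+g(a)\mid a\in A'\}$ and set $U=\Delta\oplus E$. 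A dimension count gives $\dim U=\dim G-\dim A=\dim G-\dim C$.

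It then remains to check $A\cap U=0$ and $C\cap U=0$, which together with the dimension count yields $G=A\oplus U=C\oplus U$, as desired. This verification is the only real obstacle: an element of $A\cap U$, written along $G=D\oplus A'\oplus C'\oplus E$ both as a member of $A=D\oplus A'$ and as $a+g(a)+e$ with $a\in A'$ and $e\in E$, must have vanishing $E$-component (so $e=0$) and vanishing $C'$-component (so $g(a)=0$); injectivity of $g$ forces $a=0$, hence the element is $0$. A symmetric argument, now with the $A'$-component forcing $a=0$ directly, gives $C\cap U=0$. An alternative route, parallel to Proposition \ref{pgru}, is induction on the rank with the same case split ($A+C\neq G$, and $A+C=G$ with $A\cap C$ either zero or not); the substantive case $A+C=G$, $A\cap C\neq 0$ is precisely the one that the diagonal construction above settles in one stroke.
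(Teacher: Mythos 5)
Your proof is correct, and it takes a genuinely more direct route than the paper's. The paper proves the finite-rank direction by induction on rank with a case split: if $A+B<D$ it passes to the smaller divisible group $A+B$; if $A+B=D$ and $K=A\cap B\neq 0$ it writes $D=K\oplus L$ and inducts inside $L$; and only in the disjoint case $A\cap B=0$, $D=A\oplus B$, does it produce an explicit diagonal $H=\mathbb{Q}(a_{1}+b_{1})\oplus \dots \oplus \mathbb{Q}(a_{m}+b_{m})$. You unwind that induction into a single closed-form common complement $U=(1+g)(A')\oplus E$ after splitting off $D=A\cap C$ once, which settles all of the paper's cases in one stroke; your verification that $A\cap U=C\cap U=0$ is sound, and the dimension count legitimately finishes in finite dimension (alternatively, spanning can be checked directly from $g(a)=(a+g(a))-a$ and $a=(a+g(a))-g(a)$, avoiding the count altogether). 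It is worth noting exactly where finiteness enters your construction: in the step $\dim A'=\dim C'$, obtained by subtracting $\dim(A\cap C)$ from $\dim A=\dim C$ --- precisely this subtraction (and the final count) fail in infinite rank, e.g.\ for the isomorphic subspaces spanned by $\{e_{2},e_{3},\dots\}$ and $\{e_{3},e_{4},\dots\}$ in $\mathbb{Q}^{(\aleph_{0})}$, consistently with the nonexample you cite for the forward direction. What the paper's heavier inductive scheme buys is reusability: essentially the same case analysis is recycled for divisible $p$-groups (Proposition \ref{pgru}) and for homogeneous completely decomposable groups (Proposition \ref{hcd}), where summands are not subspaces of a vector space and complements cannot be chosen so freely; your one-shot linear-algebra construction is the cleaner argument for the $\mathbb{Q}$-vector-space case specifically.
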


\begin{proof}
As already mentioned, an infinite rank direct sum of $\mathbb{Q}$ is not
perspective. Conversely, for a torsion-free divisible group $D$, the proof
goes by induction on the rank of $D$. Let $r(D)=n+1$, $A$ and $B$ are
isomorphic summands of $D$.

If $A+B<D$ then $D=(A+B)\oplus C$ and $r(A+B)\leq n$, so $A+B=A\oplus
H=B\oplus H$, since by induction the divisible group $A+B$ is perspective.
Then $D=A\oplus(H\oplus C)=B\oplus(H\oplus C)$.

Assume now that $A+B=D$. Only two cases are possible.

1) If $K:=A\cap B\neq 0$ then $K$ is divisible and $D=K\oplus L$ for some $%
L\leq D$. We have $A=K\oplus (A\cap L)$, $B=K\oplus (B\cap L)$. Here $A\cap
L\cong B\cap L$, so by induction $L=(A\cap L)\oplus C$ and $L=(B\cap
L)\oplus C$ whence $D=[K\oplus (A\cap L)]\oplus C$ and $D=[K\oplus (B\cap
L)]\oplus C$, where $[K\oplus (A\cap L)]=A$, $[K\oplus (B\cap L)]=B$.

2) If $A\cap B=0$ then $D=A\oplus B$ and since $A\cong B$ it follows that $A=%
\mathbb{Q}a_{1}\oplus \dots \oplus \mathbb{Q}a_{m}$ and $B=\mathbb{Q}%
b_{1}\oplus \dots \oplus \mathbb{Q}b_{m}$ for some $0\neq a_{1},\dots
,a_{m},b_{1},\dots ,b_{m}\in D$ (with $A$ and $B$ considered as vector
spaces on field $\mathbb{Q}$). Then $D=A\oplus H=B\oplus H$, where $H=%
\mathbb{Q}(a_{1}+b_{1})\oplus \dots \oplus \mathbb{Q}(a_{m}+b_{m})$. \ 
\end{proof}

It was already mentioned that $\mathbb{Z}$ is perspective since it is
indecomposable and that $\mathbb{Z}\oplus \mathbb{Z}$ is not perspective
(see Remark \ref{5.13}). It follows

\begin{proposition}
The only perspective free group is $\mathbb{Z}$.
\end{proposition}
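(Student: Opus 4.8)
The plan is to reduce everything to the two facts already established in the excerpt: that summands of perspective groups are perspective (Proposition \ref{dir}) and that $\mathbb{Z}\oplus\mathbb{Z}$ fails to be perspective (Remark \ref{5.13}). Since a free group is by definition $F=\bigoplus_{i\in I}\mathbb{Z}x_i$ on some basis $\{x_i\}_{i\in I}$, the whole argument turns on whether its rank $|I|$ exceeds $1$.

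First I would dispose of the small ranks directly. If $|I|=1$ then $F\cong\mathbb{Z}$, which is indecomposable and hence perspective (Remark 2 in Subsection 3.1); this is the case we want to keep. (The rank-$0$ case is the trivial group, which one normally excludes or regards as vacuously perspective.) So it remains to show that no free group of rank at least $2$ can be perspective.

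Next I would handle the rank $\geq 2$ case by exhibiting $\mathbb{Z}\oplus\mathbb{Z}$ as a direct summand. Choosing two distinct basis elements $x_1,x_2$, we have the direct decomposition
\[
F=(\mathbb{Z}x_1\oplus\mathbb{Z}x_2)\oplus\Bigl(\bigoplus_{i\neq 1,2}\mathbb{Z}x_i\Bigr),
\]
so that $\mathbb{Z}x_1\oplus\mathbb{Z}x_2\cong\mathbb{Z}\oplus\mathbb{Z}$ is a summand of $F$. If $F$ were perspective, Proposition \ref{dir} would force this summand to be perspective as well, contradicting Remark \ref{5.13}. Hence $F$ is not perspective whenever $\operatorname{rank}(F)\geq 2$, and combining with the previous paragraph the only (nonzero) perspective free group is $\mathbb{Z}$.

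I do not expect any genuine obstacle here: the statement is a clean corollary of the hereditary behaviour of perspectivity under passage to summands, and the only point requiring even a word of care is the elementary observation that a free group of rank $\geq 2$ splits off a copy of $\mathbb{Z}^2$. The result is essentially immediate once Proposition \ref{dir} and Remark \ref{5.13} are in hand, which is presumably why the authors phrase it as a consequence of those two facts.
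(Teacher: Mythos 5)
Your proof is correct and matches the paper's reasoning exactly: the paper also deduces the result from the indecomposability of $\mathbb{Z}$, the failure of perspectivity for $\mathbb{Z}\oplus\mathbb{Z}$ (Remark \ref{5.13}), and the heredity of perspectivity for summands (Proposition \ref{dir}), merely leaving implicit the observation you spell out that any free group of rank at least $2$ splits off a copy of $\mathbb{Z}\oplus\mathbb{Z}$.
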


Since the rational groups (the subgroups of $\mathbb{Q}$) are also
indecomposable it follows

\begin{proposition}
The only perspective rank 1 torsion-free groups are the rational groups.
\end{proposition}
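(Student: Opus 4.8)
The plan is to reduce the statement entirely to indecomposability, which by the Remarks above already forces perspectivity. First I would invoke the standard structure fact for rank $1$ torsion-free groups: any such group $G$ embeds in $\mathbb{Q}$ (fix $0\neq g\in G$ and extend $g\mapsto 1$ to an embedding, using that every element of $G$ is a rational multiple of $g$), so up to isomorphism the rank $1$ torsion-free groups are exactly the nonzero subgroups of $\mathbb{Q}$, i.e. the rational groups. This immediately handles the ``only'' direction, since a perspective rank $1$ torsion-free group is in particular a rank $1$ torsion-free group, hence already a rational group; there is nothing further to verify for that inclusion.

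For the substantive direction I would check that every nonzero $G\leq\mathbb{Q}$ is indecomposable. Assuming $G=B\oplus C$ with $B,C\neq 0$, pick $0\neq b\in B$ and $0\neq c\in C$; since $G$ has rank $1$ the elements $b,c$ are $\mathbb{Z}$-dependent, so $mb=nc$ for integers $m,n$ not both zero, and this common value is then a nonzero element of $B\cap C=0$, a contradiction. Hence $\mathrm{End}(G)$ has only the trivial idempotents and the only summands of $G$ are $0$ and $G$.

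Perspectivity then follows for free: the only pairs of isomorphic summands are $(0,0)$ and $(G,G)$, whose obvious common complements are $G$ and $0$ respectively, so the defining condition holds vacuously. This is precisely the observation recorded earlier that indecomposable groups are trivially perspective. I do not anticipate any genuine obstacle here: the whole proof is essentially a one-line consequence of indecomposability, and the single ingredient needing (routine) justification is the indecomposability of rational groups, which the computation above supplies.
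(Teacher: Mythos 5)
Your proof is correct and takes essentially the same route as the paper, which derives the proposition in one line from the fact that rational groups (= rank $1$ torsion-free groups) are indecomposable, together with the earlier remark that indecomposable groups are trivially perspective. Your only addition is to spell out the routine verification that nonzero subgroups of $\mathbb{Q}$ admit no nontrivial direct decomposition, which the paper treats as known.
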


Clearly

\begin{proposition}
If a torsion-free group has a summand isomorphic to $\mathbb{Z}\oplus 
\mathbb{Z}$, it is not perspective.
\end{proposition}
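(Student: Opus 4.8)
The plan is to argue by contradiction, exploiting the fact that perspectivity is inherited by summands. Suppose $G$ is torsion-free and admits a summand $S$ with $S \cong \mathbb{Z} \oplus \mathbb{Z}$, and assume toward a contradiction that $G$ is perspective.

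First I would invoke Proposition \ref{dir}: since summands of perspective groups are perspective, the summand $S$ is itself perspective. Next I would observe that perspectivity is an isomorphism invariant. This is immediate from the definition, since any isomorphism $\varphi : S \to \mathbb{Z} \oplus \mathbb{Z}$ carries summands to summands, isomorphic pairs of summands to isomorphic pairs, and common complements to common complements; so a perspective group is carried to a perspective group. (Alternatively one could appeal to the fact, recorded after Theorem \ref{3.4}, that perspectivity is an \textbf{ER}-property, hence depends only on the endomorphism ring up to isomorphism.) Consequently $\mathbb{Z} \oplus \mathbb{Z}$ would have to be perspective as well.

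This directly contradicts Remark \ref{5.13}, which exhibits the isomorphic summands $(2,5)\mathbb{Z}$ and $(1,0)\mathbb{Z}$ of $\mathbb{Z}^{2}$ that admit no common complement. Therefore the assumption fails and $G$ cannot be perspective.

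I expect no genuine obstacle here: the statement is an immediate consequence of the heredity of perspectivity under summands (Proposition \ref{dir}) together with the single computed nonexample (Remark \ref{5.13}). I would remark in passing that the torsion-free hypothesis is not actually used—the very same argument shows that \emph{any} group possessing a summand isomorphic to $\mathbb{Z} \oplus \mathbb{Z}$ is non-perspective—and that the statement is phrased for torsion-free groups only because that is the ambient setting of this subsection.
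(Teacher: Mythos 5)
Your argument is correct and is exactly the one the paper intends: the paper prefaces this proposition with ``Clearly'' and omits a proof precisely because it follows at once from Proposition \ref{dir} (summands of perspective groups are perspective) together with Remark \ref{5.13} ($\mathbb{Z}\oplus\mathbb{Z}$ is not perspective). Your side remark that the torsion-free hypothesis is inessential is also accurate; it is merely the ambient setting of the subsection.
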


Next we characterize some perspective homogeneous completely decomposable
groups.

Note that, similarly to Proposition \ref{fu-inv}, we can prove the following

\begin{proposition}
\label{fu-prod}Let $G=\prod_{i\in I}H_{i}$, where each subgroup $H_{i}$ is
fully invariant in $G$. Then $G$ is perspective iff all $H_{i}$, $i\in I$,
are perspective.
\end{proposition}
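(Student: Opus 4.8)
The plan is to mirror the proof of Proposition \ref{fu-inv}, reducing everything to the endomorphism ring. The engine is again Corollary \ref{prod} (arbitrary products of perspective rings are perspective) together with the fact that perspectivity is an ER-property (Theorem \ref{3.4}), so the whole statement follows once one establishes the ring isomorphism
\[
\mathrm{End}(G)\cong\prod_{i\in I}\mathrm{End}(H_i).
\]

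First I would dispose of the easy converse. If $G$ is perspective, then for each $i$ the factor $H_i$ is a direct summand of $G$, since $G=H_i\oplus\prod_{k\neq i}H_k$; hence $H_i$ is perspective by Proposition \ref{dir}. This direction uses nothing about full invariance. For the forward direction I would introduce the natural restriction map $\Phi:\mathrm{End}(G)\to\prod_{i\in I}\mathrm{End}(H_i)$, $f\mapsto(f|_{H_i})_{i\in I}$. Full invariance guarantees $f(H_i)\subseteq H_i$, so each $f|_{H_i}$ is a genuine endomorphism of $H_i$ and $\Phi$ is a well-defined ring homomorphism. Surjectivity is immediate, since any family $(f_i)_i$ assembles into the componentwise endomorphism $(x_i)_i\mapsto(f_i(x_i))_i$ of the product. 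Once $\Phi$ is known to be an isomorphism, the argument closes exactly as in Proposition \ref{fu-inv}: if every $H_i$ is perspective then every $\mathrm{End}(H_i)$ is perspective, so $\prod_i\mathrm{End}(H_i)\cong\mathrm{End}(G)$ is perspective by Corollary \ref{prod}, whence $G$ is perspective.

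The \emph{hard part}, and the one real departure from the direct-sum case, will be the injectivity of $\Phi$. For a direct sum the orthogonal idempotents $e_i=\iota_i\pi_i$ satisfy $\sum_i e_i=1$ as a finite sum on each element, so an endomorphism vanishing on every factor vanishes identically; for a direct product this fails, and a priori a nonzero "Specker-type" endomorphism could vanish on each $H_i$ while remaining nonzero on $G$. The key step is therefore to show that full invariance of \emph{all} the factors forces each complementary sub-product $\ker\pi_i=\prod_{k\neq i}H_k$ to be invariant as well; equivalently, that the vanishing $\pi_i f\iota_k=0$ for $k\neq i$ propagates from the individual factors to the entire complement, so that every $\pi_i f$ factors through $\pi_i$ and $f$ is genuinely diagonal. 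I would first record the easy consequence that full invariance kills all off-diagonal homomorphisms between factors (extending any $\phi\in\mathrm{Hom}(H_j,H_i)$ by zero gives an endomorphism carrying $H_j$ into $H_i\cap H_j=0$), and then leverage this to exclude the surviving product-only maps, invoking the structural description of the endomorphism ring of a direct product in the spirit of (Theorem 106.1, \cite{fuc2}). This diagonalization step is where essentially all the work lies.
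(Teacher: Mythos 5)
Your framework is exactly the paper's intended one --- the paper gives no proof beyond the remark that it goes ``similarly to Proposition \ref{fu-inv}'', i.e., via $\mathrm{End}(G)\cong\prod_{i\in I}\mathrm{End}(H_i)$, Corollary \ref{prod} and the \textbf{ER}-property of Theorem \ref{3.4} --- and your converse direction and the surjectivity of $\Phi$ are both correct. But the step you yourself isolate as the hard part is not merely unfinished: as you have formulated it, it is false. Full invariance of every $H_i$ in $G$ is equivalent (by your extension-by-zero observation) to $\mathrm{Hom}(H_i,H_k)=0$ for all $i\neq k$, and this pairwise vanishing does \emph{not} force the complementary sub-products $\prod_{k\neq i}H_k$ to be invariant, nor $\Phi$ to be injective: the kernel of $\Phi$ consists of the endomorphisms vanishing on $\bigoplus_{i\in I}H_i$, hence is isomorphic to $\mathrm{Hom}\bigl(G/\bigoplus_{i\in I}H_i,\,G\bigr)$, which is not controlled by homomorphisms between the factors. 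Concretely, take distinct primes $(p_n)_{n\geq 1}$ and $G=\mathbb{Q}\times\prod_{n\geq 1}\mathbb{Z}(p_n)$, with $H_0=\mathbb{Q}$ and $H_n=\mathbb{Z}(p_n)$. Each $H_n$ is the $p_n$-primary component of $T(G)$ and $H_0=D(G)$, so every factor is fully invariant; yet, writing $P=\prod_{n\geq 1}\mathbb{Z}(p_n)$, the quotient $P/\bigoplus_{n\geq 1}\mathbb{Z}(p_n)$ is a nonzero divisible torsion-free group, and composing the canonical surjection $P\to P/\bigoplus_{n}\mathbb{Z}(p_n)$ with a nonzero map to $\mathbb{Q}\leq G$ yields a nonzero endomorphism of $G$ vanishing on every $H_i$. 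So here $\mathrm{End}(G)\not\cong\prod_i\mathrm{End}(H_i)$, each $\ker\pi_i$ fails to be invariant, and no diagonalization argument from the stated hypotheses can rescue injectivity. (This example does not refute the proposition itself --- that $G$ is in fact perspective, e.g.\ by Corollary \ref{red} --- but it defeats the proposed route, and indeed shows the paper's one-line sketch is incomplete at this level of generality.)

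What injectivity actually requires is the extra condition that an endomorphism vanishing on all factors is zero, i.e., $\mathrm{Hom}\bigl(G/\bigoplus_{i}H_i,\,G\bigr)=0$, and this is what holds in the paper's only application, Proposition \ref{tfac}: there $G=\prod_{p}G_p$ with $G_p$ a free $\hat{\mathbb{Z}}_p$-module of finite rank, so for each $q$ the sub-product $\prod_{p\neq q}G_p$ is $q$-divisible while $\bigcap_{n}q^nG_q=0$; hence $\pi_q f$ kills $\prod_{p\neq q}G_p$ for divisibility reasons and kills $G_q$ by hypothesis, giving $f=0$ and making $\Phi$ an isomorphism. Your proof becomes correct once you either add such a hypothesis (e.g., $\mathrm{Hom}(\prod_{k\neq i}H_k,H_i)=0$ for all $i$) or verify it in the intended application; as it stands, the proposal stalls at --- and its suggested key lemma contradicts --- exactly the point where the paper is silent.
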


Let $p$ be a prime number and $\mathbb{Q}_{p}$ the group (ring) of all
rational numbers with denominator coprime with $p$.

\begin{proposition}
\label{hcd}Let $G$ be a homogeneous completely decomposable group with type $%
\mathbf{t}(G)=\mathbf{t}(\mathbb{Q}_{p})$. The group $G$ is perspective iff $%
G$ has finite rank.
\end{proposition}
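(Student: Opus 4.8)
The plan is to treat the two implications separately: the forward one is immediate from earlier results, and the converse carries all the content.

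For the forward direction, write $G\cong\mathbb{Q}_p^{(m)}$ (a homogeneous completely decomposable group of type $\mathbf t(\mathbb{Q}_p)$ is, by definition, a direct sum of copies of $\mathbb{Q}_p$). If $m$ is infinite then $m=\aleph_0+m$, so $G$ has a summand isomorphic to $\mathbb{Q}_p^{(\aleph_0)}$. Since $\mathbb{Q}_p$ has rank $1$ and $\mathbb{Q}_p\oplus\mathbb{Q}_p$ has rank $2$, we have $\mathbb{Q}_p\ncong\mathbb{Q}_p\oplus\mathbb{Q}_p$, so by the Nonexample of Subsection 3.1 the group $\mathbb{Q}_p^{(\aleph_0)}$ is not even \emph{IC}, hence not perspective; by Proposition \ref{dir} neither is $G$.

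For the converse, assume $r(G)=n<\infty$, i.e. $G\cong\mathbb{Q}_p^{\,n}$. The first step is to recognise $\mathbb{Q}_p$ as the localisation $\mathbb{Z}_{(p)}$, a discrete valuation ring (a local principal ideal domain with residue field $\mathbb{F}_p=\mathbb{Z}_{(p)}/p\mathbb{Z}_{(p)}$). Because every additive endomorphism of a $\mathbb{Z}_{(p)}$-module is automatically $\mathbb{Z}_{(p)}$-linear, the $\mathbb{Z}$-summands of $G$ coincide with its $\mathbb{Z}_{(p)}$-submodule summands, and $\mathrm{End}(G)\cong M_n(\mathbb{Z}_{(p)})$. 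Over a DVR every summand of $\mathbb{Z}_{(p)}^{\,n}$ is free, isomorphic summands have equal rank, and a rank-$k$ summand $A$ is precisely the column space of a unimodular $n\times k$ matrix $\alpha$ (one completable to an invertible matrix). So, given isomorphic summands $A\cong C$ of rank $k$, represent them by unimodular matrices $\alpha,\gamma$. The key idea is to solve the problem over the residue field and then lift: reducing mod $p$, the columns of $\bar\alpha,\bar\gamma$ span $k$-dimensional subspaces $\bar A,\bar C\subseteq\mathbb{F}_p^{\,n}$ (unimodularity forces the reduction to keep rank $k$), and two subspaces of equal dimension in a finite-dimensional space always have a common complement — built greedily, using that a vector space is never the union of two proper subspaces. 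Pick such a $\bar U$ with $\bar A\oplus\bar U=\mathbb{F}_p^{\,n}=\bar C\oplus\bar U$, lift a basis matrix of $\bar U$ to any $n\times(n-k)$ matrix $\mu$ over $\mathbb{Z}_{(p)}$, and set $U=\operatorname{colspan}(\mu)$. Since $\mathbb{Z}_{(p)}$ is local, a square matrix is invertible iff its reduction mod $p$ is; as $[\,\bar\alpha\mid\bar\mu\,]$ and $[\,\bar\gamma\mid\bar\mu\,]$ are invertible over $\mathbb{F}_p$, the blocks $[\,\alpha\mid\mu\,]$ and $[\,\gamma\mid\mu\,]$ are invertible over $\mathbb{Z}_{(p)}$, which says exactly $G=A\oplus U=C\oplus U$.

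The main obstacle is that the inductive scheme of the divisible cases (Propositions \ref{pgru} and \ref{divtf}) breaks down: there, when $A+C\neq D$ one uses that $A+C$ is itself a summand, but over $\mathbb{Z}_{(p)}$ the sum of two rank-$1$ summands need not be pure — for instance $\mathbb{Z}_{(p)}e_1+\mathbb{Z}_{(p)}(e_1+pe_2)=\mathbb{Z}_{(p)}e_1+\mathbb{Z}_{(p)}pe_2$ has torsion quotient — so one cannot drop the rank. Passing to the residue field is what circumvents this, the two delicate points being that reduction mod $p$ preserves the rank of a unimodular matrix and that invertibility lifts from $\mathbb{F}_p$ to the local ring $\mathbb{Z}_{(p)}$. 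As a consistency check, the conclusion also follows ring-theoretically: $\mathbb{Z}_{(p)}$ is local, hence of stable range $1$, hence so is $M_n(\mathbb{Z}_{(p)})\cong\mathrm{End}(G)$, and a ring of stable range $1$ is perspective, so $G$ is perspective by Theorem \ref{3.4}.
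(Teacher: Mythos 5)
Your proof is correct, but it takes a genuinely different route from the paper's. The paper proves the converse by induction on the rank, splitting into four cases according to which of the intersections $A\cap C$, $B\cap K$, $A\cap K$ are non-zero: the first three cases are handled with purity arguments (a pure subgroup of $G$ is a summand, cf.\ \cite{fuc2}, Lemma 86.8), pure hulls of projections, and diagonals via Theorem \ref{dia}, while only the final ``generic'' case ($A\cap C=A\cap K=B\cap K=C\cap B=0$) passes to the free $\mathbb{Q}_p$-module structure and builds a common complement $H\oplus L\oplus M$ by hand, partitioning the basis according to which coefficients $r_i,s_i$ lie in $p\mathbb{Q}_p$ and exploiting that elements of $\mathbb{Q}_p\setminus p\mathbb{Q}_p$ are units. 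You instead make the local-ring viewpoint do \emph{all} the work: summands are column spans of unimodular matrices, any two equal-dimensional subspaces of $\mathbb{F}_p^{\,n}$ admit a common complement (your greedy argument is sound, since no group is the union of two proper subgroups), and invertibility lifts from the residue field to the local ring $\mathbb{Z}_{(p)}$. Both proofs ultimately rest on the same fact --- units of $\mathbb{Z}_{(p)}$ are detected mod $p$ --- but your reduce-and-lift packaging eliminates the induction and the case analysis entirely, and it generalizes verbatim to finite-rank free modules over any local ring; in particular it directly supplies the $p$-adic step of Proposition \ref{tfac}, where the paper can only say ``as in Proposition \ref{hcd}, one can prove\dots''. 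Your counterexample $\mathbb{Z}_{(p)}e_1+\mathbb{Z}_{(p)}(e_1+pe_2)$, showing why the $A+C\neq D$ trick of Propositions \ref{pgru} and \ref{divtf} is unavailable here, correctly identifies why the paper switched from sums to intersections in this proposition. Finally, your stable-range-$1$ remark is a legitimate one-line alternative: $\mathrm{End}(G)\cong M_n(\mathbb{Z}_{(p)})$ has stable range $1$, stable range $1$ implies substitution and hence perspectivity (this is in \cite{gar}), and Theorem \ref{3.4} transfers it to $G$ --- arguably this, rather than a ``consistency check'', is the shortest correct proof of the converse.
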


\begin{proof}
As already mentioned, only one way needs a proof. Suppose the rank of $G$ is
finite and $G=A\oplus B=C\oplus K$, where $A\cong C$. The proof goes by
induction on $n=r(G)$ the rank of $G$. We distinguish several cases.

1) $A_{1}=A\cap C\neq 0$. Then $A_{1}$, as pure subgroup, is a summand in $G$%
~(see \cite{fuc2}, Lemma \textbf{86.8}), say $A=A_{1}\oplus A_{2}$, $%
C=A_{1}\oplus C_{2}$, where $C_{2}=(A_{2}\oplus B)\cap C$. So $A_{2}\oplus
B=C_{2}\oplus V$ for some $V$. By the induction hypothesis $A_{2}\oplus
U=C_{2}\oplus U$ for some $U$. Hence $G=(A_{1}\oplus A_{2})\oplus
U=(A_{1}\oplus C_{2})\oplus U$, where $A_{1}\oplus A_{2}=A$ and $A_{1}\oplus
C_{2}=C$.

2) $F=B\cap K\neq 0$. Then $B=B^{\prime }\oplus F$ and $K=K^{\prime }\oplus
F $ for some $B^{\prime }$, $K^{\prime }$ and $(A\oplus F)\oplus B^{\prime
}=(C\oplus F)\oplus K^{\prime }$, where $B^{\prime }\cap K^{\prime }=0$. If
now $(A\oplus F)\oplus U=(C\oplus F)\oplus U$ then $A\oplus (F\oplus
U)=C\oplus (F\oplus U)$. So this case reduces to $B\cap K=0$.

3) $A_{K}=A\cap K\neq 0$. Then $A=A_{K}\oplus A_{2}$, $K=A_{K}\oplus K_{2}$
for some $A_{2}$, $K_{2}$ such that $A_{2}\leq C\oplus K_{2}$. Let $\pi
:C\oplus K_{2}\rightarrow C$ be the projection and let $C_{2}=\langle \pi
(A_{2})\rangle $ be the pure hull of $\pi (A_{2})$ in $C$. Since $A_{2}\cap
K_{2}=0$ then $C_{2}\cong A_{2}$ and it follows that $A_{2}\leq C_{2}\oplus
K_{2}$, and so $C_{2}\oplus K_{2}=A_{2}\oplus V$ for some $V$. Since $%
r(C_{2}\oplus K_{2})<n$ by induction hypothesis $C_{2}\oplus U=A_{2}\oplus U$
for some $U$. So $(C_{1}\oplus C_{2})\oplus (A_{K}\oplus U)=(A_{K}\oplus
A_{2})\oplus (C_{1}\oplus U)$, where $A_{K}\cong C_{1}$. Hence if $H$ is a
diagonal in $A_{K}\oplus C_{1}$ (with respect to $A_{K}$ and $C_{1}$) then $%
C\oplus (H\oplus U)=A\oplus (H\oplus U)$. The case $C\cap B\neq 0$ is
similar.

4) $A\cap C=A\cap K=B\cap K=C\cap B=0$. It follows that $r(A)=r(B)=r(K)$.
Let $R=\mathbb{Q}_{p}$. Then $G$ is a free $R$-module of rank $n=2m$. We
have $A=\bigoplus_{i=1}^{m}Ra_{i}$, $B=\bigoplus_{i=1}^{m}Rb_{i}$, $%
C=\bigoplus_{i=1}^{m}Rc_{i}$, and $K=\bigoplus_{i=1}^{m}Rk_{i}$. Since the
pure submodules are summands in $G$, we can choose $c_{i}=a_{i}^{\prime
}+b_{i}^{\prime }$ for some $a_{i}^{\prime }=r_{i}a_{i}$, $b_{i}^{\prime
}=s_{i}b_{i}$, where $r_{i},s_{i}\in R$.

Assume $s_{i}\in pR$, $i=1,\dots ,l$ for some $l\leq m$ and $s_{i}\in
R\setminus pR$ for $i=l+1,\dots ,m$. Note that if $s_{i}\in pR$ then $%
r_{i}\in R\setminus pR$, and similarly if $r_{i}\in pR$ then $s_{i}\in
R\setminus pR$. Let $H=\bigoplus_{i=1}^{l}R(a_{i}+b_{i})$. Then $%
r_{i}^{-1}c_{i}=a_{i}+s_{i}^{\prime }b_{i}$, where $s_{i}^{\prime
}=r_{i}^{-1}s_{i}$ for $i=1,\dots ,l$.

Observe that $H\cap C=0$. Indeed, if $h\in H$ and $h=t_{1}(a_{1}+b_{1})+%
\dots +t_{l}(a_{l}+b_{l})\in C$ for some $t_{1},\dots ,t_{l}\in R$, then $%
[t_{1}(a_{1}+b_{1})+\dots +t_{l}(a_{l}+b_{l})]-[t_{1}(a_{1}+s_{i}^{\prime
}b_{1})+\dots +t_{l}(a_{l}+s_{i}^{\prime }b_{l})]=t_{1}(1-s_{1}^{\prime
})b_{1}+\dots +t_{l}(1-s_{l})b_{l}\in C\cap B=0$. So $t_{1}=\dots =t_{l}=0$,
i.e. $h=0$.

Assume also that $r_{i}\in pR$ for $i=l+1,\dots ,l+r$, when $l+r\leq m$ and $%
r_{i}\in R\setminus pR$ for $i=l+r+1,\dots ,m$ when $l+r<m$. Then as before $%
L\cap (C\oplus H)=0$, where $L=\bigoplus_{i=l+1}^{l+r}R(a_{i}+b_{i})$. If $%
l+r<m$ then let $M=\bigoplus_{i=l+r+1}^{m}R(a_{i}+c_{i})$. We show that $%
A\oplus (H\oplus L\oplus M)=C\oplus (H\oplus L\oplus M)=G$. Indeed, this
follows from the fact that these sums of summands are direct, as for the
construction we used linear independent subsystems and these sums equal $G$
since these contain the basis of $G$.

From the construction of $H$ and $L$ it follows that $b_{i}\in A\oplus
H\oplus L$ for $i=1,\dots ,l+r$. Since $c_{i}=r_{i}a_{i}+s_{i}b_{i}$, where $%
s_{i}$ are invertible in $R$ for $i=l+r+1,\dots ,m$ it follows that $%
b_{i}\in A\oplus M$ for the specified $i$. Hence $G=A\oplus (H\oplus L\oplus
M)$. It remains to prove that $a_{i},b_{i}\in C\oplus (H\oplus L\oplus M)$
for all $i=1,\dots ,m$. Indeed, if for example, $i=1,\dots ,l$, then $%
c_{i}-(r_{i}a_{i}+r_{i}b_{i})=(r_{i}a_{i}+s_{i}b_{i})-(r_{i}a_{i}+r_{i}b_{i})=(s_{i}-r_{i})b_{i}\in C\oplus H 
$. Since $s_{i}\in pR$ and $r_{i}\in R\setminus pR$ we get $s_{i}-r_{i}\in
R\setminus pR$, so this element is invertible in $R$, whence $b_{i}$ and so $%
a_{i}$ belong to $C\oplus H\leq C\oplus (H\oplus L\oplus M)$ for $i=1,\dots
,l$. Similarly we can prove that all $a_{i},b_{i}\in C\oplus (H\oplus
L\oplus M)$ for $i=1,\dots ,m$, i.e. $G=C\oplus (H\oplus L\oplus M)$. Since
the sum of the ranks of $H$, $L$, $M$ equals $m$ the latter sum is direct.
\end{proof}

\textbf{Remark}. Since the pure subgroups of group $G$ in the previous
proposition are summands of $G$, this is a trivial example of groups with
all pure subgroups being perspective.

\bigskip

The next result draws attention on rank 2 summands of homogeneous
torsion-free groups of finite rank.

\begin{lemma}
If $G$ is a torsion-free homogeneous group of finite rank then $G$ is
perspective iff $G$ has a perspective rank $2$ summand.
\end{lemma}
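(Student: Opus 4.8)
The plan is to prove the two implications separately. The forward implication is immediate: if $G$ is perspective then, a rank $2$ summand being in particular a summand, Proposition~\ref{dir} shows it is perspective, so $G$ has a perspective rank $2$ summand (and if $r(G)=2$ then $G$ itself is one). All the content is in the reverse implication, so I assume $H$ is a perspective rank $2$ summand of $G$ and aim to show $G$ is perspective. Since perspectivity is an \textbf{ER}-property (Theorem~\ref{3.4}), I will argue with summands directly and induct on $n=r(G)$, the cases $n\le 2$ being exactly the hypothesis.

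For the inductive step I would take $G=A\oplus B=C\oplus K$ with $A\cong C$, following the template of Proposition~\ref{hcd}. First I would dispose of the cases in which one of the intersections $A\cap C,\ A\cap K,\ B\cap K,\ C\cap B$ is nonzero: each such intersection is a pure subgroup, hence a summand of $G$ (Lemma \textbf{86.8} of \cite{fuc2}), and splitting it off lowers the rank, so the induction hypothesis applied inside a proper summand yields a common complement there which the modular law enlarges to a common complement in $G$. The remaining, transversal case is where $H$ must enter: here $A$ and $C$ meet $B$ and $K$ trivially, and I would peel the summands off one rank at a time, choosing a pure rank $1$ subgroup $a\le A$ together with its partner $c\le C$, enclosing $a+c$ in a rank $2$ summand $H'$ of $G$, and invoking perspectivity of $H'$ to produce a common complement of $a$ and $c$ inside $H'$; writing $G=H'\oplus H''$ and passing to $H''$ drops the rank by $2$. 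The common complements are then assembled across the cases by the diagonal construction of Theorem~\ref{dia}, in the spirit of Propositions~\ref{FP} and~\ref{hcd}.

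The hard part is the transversal case, and it is precisely here that homogeneity is indispensable. I only know that the \emph{one} summand $H$ is perspective, whereas two rank $2$ summands of a homogeneous group of finite rank need not be isomorphic, so it is not automatic that the enclosing summand $H'$ I must manufacture is itself perspective. The plan is to use that $G$ is homogeneous of a fixed type $\mathbf{t}(G)$ to force $H'\cong H$, both being rank $2$ and homogeneous of type $\mathbf{t}(G)$, and then to transport perspectivity along that isomorphism. Making this rigorous requires knowing that the relevant pure rank $1$ and rank $2$ subgroups are genuine summands and that the rank $2$ homogeneous summands of $G$ are determined up to isomorphism; this is exactly the structural input carried by the homogeneity hypothesis, and it is the step I expect to be the main obstacle. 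Once the transversal case is secured, the remainder is the routine modular-law bookkeeping already illustrated in Proposition~\ref{hcd}.
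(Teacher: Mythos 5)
Your skeleton coincides with the paper's own proof: necessity from Proposition~\ref{dir}; sufficiency by induction on $r(G)$, reducing as in Proposition~\ref{hcd} to the transversal case $A\cap C=A\cap K=0$; then picking $0\neq a\in A$, writing $a=c+x$ with $c\in C$, $x\in K$, forming the pure hulls $A_{1}=\langle a\rangle _{\ast }$, $C_{1}=\langle c\rangle _{\ast }$, $K_{1}=\langle x\rangle _{\ast }$, and applying perspectivity of the rank $2$ summand $C_{1}\oplus K_{1}$ (which contains both $A_{1}$ and $C_{1}$) to get a common complement $W_{1}$. One mechanical difference: the paper peels off rank \emph{one} at a time, not two. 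From $C_{1}\oplus K_{1}=A_{1}\oplus W_{1}=C_{1}\oplus W_{1}$ it passes to $G=A_{1}\oplus (W_{1}\oplus C_{2}\oplus K_{2})=C_{1}\oplus (W_{1}\oplus C_{2}\oplus K_{2})$ and inducts on the rank $n-1$ group $W_{1}\oplus C_{2}\oplus K_{2}$, which contains the residual isomorphic summands $A_{2}=A\cap (W_{1}\oplus C_{2}\oplus K_{2})$ and $C_{2}$. Your variant of ``passing to $H''$ of rank $n-2$'' does not work verbatim, because the residual pieces of $A$ and $C$ live in the rank $n-1$ complement involving $W_{1}$, not inside a complement of the whole rank $2$ summand; this is repairable, but only by rearranging into the paper's form. (The diagonal construction of Theorem~\ref{dia} is not needed in the lemma itself, only in Proposition~\ref{hcd}.)

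The step you single out as the main obstacle is a genuine gap in your proposal, and you should know that it is exactly the step the paper settles by bare assertion: its proof opens with ``since equal rank summands of $G$ are isomorphic, it suffices to focus on any summand,'' after which perspectivity travels along the isomorphism because it is an \textbf{ER}-property (Theorem~\ref{3.4}). No argument is offered there, and your suspicion is well founded: homogeneity classifies rank $1$ summands (all have type $\mathbf{t}(G)$, and rank $1$ groups are determined by their type, which is why $A_{1}\cong C_{1}$ is legitimate), but ``rank $2$ and homogeneous of type $\mathbf{t}(G)$'' does \emph{not} determine a group up to isomorphism --- an indecomposable and a completely decomposable homogeneous rank $2$ group of the same type exist side by side --- so your plan to force $H'\cong H$ from homogeneity alone cannot succeed as stated, and neither you nor the paper supplies what would. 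The same caveat applies to your appeal to Lemma \textbf{86.8} of \cite{fuc2}: that lemma concerns homogeneous \emph{completely decomposable} groups, so for merely homogeneous $G$ neither the intersections $A\cap C$, etc., nor the pure hulls $A_{1},C_{1},K_{1}$ are automatically summands --- again a fact the paper's proof uses without justification when it declares them ``(rank 1) summands.'' In short: your proposal reproduces the paper's argument, honestly flagging as open the two structural inputs (isomorphy of equal rank summands, pure hulls being summands) that the paper asserts without proof; as a self-contained proof it is incomplete at precisely the point you identified.
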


\begin{proof}
The condition is necessary since summands of perspective groups are
perspective.

Conversely, since equal rank summands of $G$ are isomorphic, it suffices to
focus on any summand. Let $G=A\oplus B=C\oplus K$, where $A\cong C$. As in
Proposition \ref{hcd}, we can assume that $A\cap C=A\cap K=0$. Let $0\neq
a\in A$ and $a=c+x$, where $c\in C$, $x\in K$ ($c,x\neq 0$). If $%
A_{1}=\langle a\rangle _{\ast }$, $C_{1}=\langle c\rangle _{\ast }$, $%
K_{1}=\langle x\rangle _{\ast }$ then $A_{1}$, $C_{1}$ and $K_{1}$ are (rank
1) summands, so $A=A_{1}\oplus A_{2}$, $C=A_{1}\oplus C_{2}$, $K=K_{1}\oplus
K_{2}$ and $G=(C_{1}\oplus K_{1})\oplus (C_{2}\oplus K_{2})$. By hypothesis, 
$C_{1}\oplus K_{1}=C_{1}\oplus W_{1}=A_{1}\oplus W_{1}$ for some $W_{1}$, so 
$G=C_{1}\oplus (W_{1}\oplus C_{2}\oplus K_{2})=A_{1}\oplus (W_{1}\oplus
C_{2}\oplus K_{2})$. Since $A=A_{1}\oplus A\cap (W_{1}\oplus C_{2}\oplus
K_{2})$, consider$A_{2}=A\cap (W_{1}\oplus C_{2}\oplus K_{2})$. Hence $%
W_{1}\oplus C_{2}\oplus K_{2}=A_{2}\oplus L$ for some $L$ and we complete
the proof by induction: $W_{1}\oplus C_{2}\oplus K_{2}=C_{2}\oplus
W=A_{2}\oplus W$ so $G=C_{1}\oplus (W_{1}\oplus C_{2}\oplus
K_{2})=(C_{1}\oplus C_{2})\oplus W=A_{1}\oplus (W_{1}\oplus C_{2}\oplus
K_{2})=(A_{1}\oplus A_{2})\oplus W$.
\end{proof}

Using Proposition \ref{fu-inv} and Proposition \ref{fu-prod} it follows that

\begin{corollary}
Let $G=\bigoplus_{p\in P}G_{p}$ ($G=\prod_{p\in P}G_{p}$), where $P$ is some
subset of prime numbers and $G_{p}$ are homogeneous completely decomposable
groups of finite rank with type $t(G_{p})=t(\mathbb{Q}_{p})$. Then the group 
$G$ is perspective.
\end{corollary}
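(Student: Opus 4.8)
The plan is to deduce the corollary directly from Proposition~\ref{fu-inv} (in the direct sum case) and Proposition~\ref{fu-prod} (in the direct product case), so the whole task reduces to verifying the two hypotheses of those propositions for the family $\{G_p\}_{p\in P}$. The first hypothesis, that each $G_p$ is perspective, is immediate: each $G_p$ is homogeneous completely decomposable of finite rank with type $\mathbf{t}(\mathbb{Q}_p)$, so Proposition~\ref{hcd} applies verbatim. Thus the entire content of the proof lies in the second hypothesis, namely that every $G_p$ is a fully invariant subgroup of $G$.

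To establish full invariance I would reduce it to a $\mathrm{Hom}$-vanishing statement. Writing $\iota_p\colon G_p\to G$ for the canonical inclusion and $\pi_q\colon G\to G_q$ for the canonical projection, full invariance of $G_p$ amounts to showing $\pi_q\circ\varphi\circ\iota_p=0$ for every endomorphism $\varphi$ of $G$ and every $q\neq p$; since this composite is a genuine homomorphism $G_p\to G_q$, it suffices to prove $\mathrm{Hom}(G_p,G_q)=0$ whenever $p\neq q$. This formulation has the advantage of covering both the direct sum and the direct product uniformly: in either case $\pi_q\circ\varphi\circ\iota_p$ is a well-defined homomorphism between the two components, so no special argument is needed for the richer endomorphism structure of an infinite product.

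It remains to compute $\mathrm{Hom}(G_p,G_q)=0$ for $p\neq q$, and here I would argue through types. As $G_p=\mathbb{Q}_p^{(n_p)}$ and $G_q=\mathbb{Q}_q^{(n_q)}$ with $n_p,n_q$ finite, any homomorphism between them is a finite matrix of maps $\mathbb{Q}_p\to\mathbb{Q}_q$, so it is enough to see $\mathrm{Hom}(\mathbb{Q}_p,\mathbb{Q}_q)=0$. A nonzero homomorphism between rank~$1$ torsion-free groups forces $\mathbf{t}(\mathbb{Q}_p)\leq\mathbf{t}(\mathbb{Q}_q)$; but $\mathbb{Q}_p$ is $q$-divisible (its $q$-height is $\infty$) while $\mathbb{Q}_q$ is not (its $q$-height is $0$), so the $q$-coordinates of the two characteristics are $\infty$ and $0$, giving $\mathbf{t}(\mathbb{Q}_p)\not\leq\mathbf{t}(\mathbb{Q}_q)$ and hence no nonzero map. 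With each $G_p$ both perspective and fully invariant, Proposition~\ref{fu-inv} (respectively Proposition~\ref{fu-prod}) then yields that $G$ is perspective. I expect the full-invariance step to be the only real obstacle, and within it the single delicate point is getting the type inequality in the correct direction; once the $\mathrm{Hom}$-vanishing is secured, both the sum and product cases close immediately.
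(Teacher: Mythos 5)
Your proposal is correct and follows essentially the same route as the paper, which derives the corollary from Propositions \ref{fu-inv} and \ref{fu-prod} together with Proposition \ref{hcd}; your verification that each $G_p$ is fully invariant via $\mathrm{Hom}(G_p,G_q)=0$ for $p\neq q$ (the $q$-height of $\mathbb{Q}_p$ being $\infty$ while that of $\mathbb{Q}_q$ is $0$) is exactly the detail the paper leaves implicit, and you handle it with the type inequality in the correct direction. Nothing is missing.
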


Next, we describe the perspective torsion-free algebraically compact groups.
Denote by $\mathbb{\hat{Z}}_{p}$ be the ring (group) of $p$-adic integers
and by $\mathbb{P}$ the set of all prime numbers.

\begin{proposition}
\label{tfac}A non-zero torsion-free algebraically compact group $G$ is
perspective iff $G=\prod_{p\in \pi }G_{p}$, where $\varnothing \neq \pi
\subseteq \mathbb{P}$ and $G_{p}$ is a finite direct product of copies of
the group $\mathbb{\hat{Z}}_{p}$.
\end{proposition}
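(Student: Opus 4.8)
The plan is to combine the structure theory of torsion-free algebraically compact groups with the reduction machinery already at hand, and then to settle a purely local question over the ring $\mathbb{\hat{Z}}_{p}$. Recall that a torsion-free algebraically compact group splits as $G=D(G)\oplus R$ with $D(G)$ torsion-free divisible and $R$ reduced, and that a reduced torsion-free algebraically compact group has the form $R=\prod_{p}R_{p}$, where $R_{p}$ is the $p$-adic completion of a free $\mathbb{\hat{Z}}_{p}$-module (see \cite{fuc1}). By Corollary \ref{red}(b), $G$ is perspective iff $D(G)$ and $R$ are; by Proposition \ref{divtf} the summand $D(G)\cong \mathbb{Q}^{(\kappa)}$ is perspective exactly when $\kappa$ is finite. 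The displayed characterization is the reduced one ($D(G)=0$), so I would first record this reduction (a nonzero $G$ of the stated product form is automatically reduced, and $\pi\neq\varnothing$ because $G\neq 0$) and then concentrate on proving that $R$ is perspective iff each $R_{p}$ is a finite-rank free $\mathbb{\hat{Z}}_{p}$-module.

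Next I would reduce to a single prime. For $p\neq q$ one has $\mathrm{Hom}(R_{p},R_{q})=0$: since $q$ is a unit of $\mathbb{\hat{Z}}_{p}$, the module $R_{p}$ is $q$-divisible, while $R_{q}$ is $q$-reduced, so any homomorphism has $q$-divisible image inside a $q$-reduced group and hence vanishes. Consequently each $R_{p}$ is fully invariant in $R=\prod_{p}R_{p}$, and Proposition \ref{fu-prod} gives that $R$ is perspective iff every $R_{p}$ is perspective. Moreover, because $\mathrm{End}_{\mathbb{Z}}(\mathbb{\hat{Z}}_{p})=\mathbb{\hat{Z}}_{p}$, the $\mathbb{Z}$-module and $\mathbb{\hat{Z}}_{p}$-module summands of $R_{p}$ coincide, so by the ER-property of perspectivity (Theorem \ref{3.4}) it suffices to test perspectivity of $R_{p}$ in the category of $\mathbb{\hat{Z}}_{p}$-modules.

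The heart of the argument is then the local dichotomy for $M:=R_{p}$, governed by the invariant $\mu=\dim_{\mathbb{F}_{p}}(M/pM)$. If $\mu=n$ is finite, then $M\cong \mathbb{\hat{Z}}_{p}^{\,n}$ is a finite-rank free module over the complete discrete valuation ring $\mathbb{\hat{Z}}_{p}$, and I would prove perspectivity by repeating the induction of Proposition \ref{hcd} with $\mathbb{\hat{Z}}_{p}$ in place of $\mathbb{Q}_{p}$: the only facts used there are that finite-rank pure submodules are summands and that every ring element is either a unit or lies in $p\mathbb{\hat{Z}}_{p}$, both of which hold for $\mathbb{\hat{Z}}_{p}$. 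If $\mu$ is infinite, then since $p$-adic completion commutes with finite direct sums and $\mu+1=\mu$, we get $M\cong M\oplus \mathbb{\hat{Z}}_{p}$; thus $M$ is isomorphic to a proper direct summand of itself. As a perspective module satisfies internal cancellation, it is directly finite (applying IC to $M=M\oplus 0$ and $M=A\oplus B$ with $A\cong M$ forces $B=0$), so such an $M$ cannot be perspective.

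Assembling these steps, $R$ is perspective iff each $R_{p}$ is finite-rank free, i.e. $R=\prod_{p\in\pi}\mathbb{\hat{Z}}_{p}^{\,n_{p}}$ with $\pi\neq\varnothing$, which is the asserted form (a divisible summand, if allowed, would contribute a finite-rank $\mathbb{Q}^{(n)}$). I expect the main obstacle to be the finite-rank positive direction: carrying the diagonal construction of Proposition \ref{hcd} over to $\mathbb{\hat{Z}}_{p}$ and, in particular, checking the identification $\mathrm{End}_{\mathbb{Z}}(\mathbb{\hat{Z}}_{p})=\mathbb{\hat{Z}}_{p}$ so that one may legitimately pass between the two module structures; the infinite-rank direction, by contrast, is disposed of cleanly by the direct-finiteness observation.
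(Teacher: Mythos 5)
Your proposal is correct, and its skeleton is the same as the paper's: decompose the group into its $p$-adic components $\prod_{p}G_{p}$, reduce to a single prime via Proposition \ref{fu-prod} (with full invariance coming from $\mathrm{Hom}(G_{p},G_{q})=0$ for $p\neq q$, exactly as you argue), and settle the finite-rank local case by rerunning the induction of Proposition \ref{hcd} over $\mathbb{\hat{Z}}_{p}$ --- which is precisely what the paper does (``As in Proposition \ref{hcd}, one can prove that $p$-adic algebraically compact modules of finite rank are perspective''). You differ in three places, each to your advantage. First, the paper's necessity direction is the bare assertion ``So the rank of each $G_{p}$ is finite''; the nonexample of Section 3.1 does not apply directly, since an infinite direct sum of copies of $\mathbb{\hat{Z}}_{p}$ is only a pure submodule, not a summand, of its completion. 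Your argument --- if $\mu=\dim_{\mathbb{F}_{p}}(G_{p}/pG_{p})$ is infinite then $G_{p}\cong G_{p}\oplus \mathbb{\hat{Z}}_{p}$ because $p$-adic completion commutes with finite direct sums and $\mu+1=\mu$, and a perspective module is IC, hence directly finite --- supplies the missing justification and is correct. Second, you make explicit the passage between $\mathbb{Z}$-module and $\mathbb{\hat{Z}}_{p}$-module perspectivity via the ER-property (Theorem \ref{3.4}) and $\mathrm{End}_{\mathbb{Z}}(G_{p})=\mathrm{End}_{\mathbb{\hat{Z}}_{p}}(G_{p})$, a step the paper leaves tacit. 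Third, your treatment of the divisible part via Corollary \ref{red}(b) and Proposition \ref{divtf} exposes a genuine imprecision in the statement as printed: $\mathbb{Q}$ (or $\mathbb{Q}\oplus\mathbb{\hat{Z}}_{p}$) is a non-zero torsion-free algebraically compact perspective group not of the displayed form, so the proposition is literally true only for reduced $G$ --- the reading you adopt, and the one the paper's proof (which invokes the structure theorem in its reduced form) implicitly intends. The one soft spot on your side matches the paper's: both of you assert rather than verify that the induction of Proposition \ref{hcd} transfers verbatim to $\mathbb{\hat{Z}}_{p}$; your checklist of the two properties actually used (finite-rank pure submodules are summands, and every element outside $p\mathbb{\hat{Z}}_{p}$ is a unit) is the right one, so this is a matter of writing the details out, not a gap.
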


\begin{proof}
To show that the condition is necessary, first recall that any torsion-free
algebraically compact group $G$ has the form $G=\prod_{p\in \pi }G_{p}$,
where $G_{p}$ is a $p$-adic algebraically compact group~(see \cite{fuc1},
Proposition \textbf{40.1}), and in particular, is $\mathbb{\hat{Z}}_{p}$%
-module. So the rank of each $G_{p}$ is finite, i.e. $G_{p}$ is a free $%
\mathbb{\hat{Z}}_{p}$-module of finite rank.

To show that the condition is sufficient, also recall that intersections of
summands in torsion-free $p$-adic algebraically compact groups are also
summands of this group. Next note that in the ring $\mathbb{\hat{Z}}_{p}$,
all elements of $\mathbb{\hat{Z}}_{p}\setminus p\mathbb{\hat{Z}}_{p}$ are
invertible, and that in any $\mathbb{\hat{Z}}_{p}$-module of finite rank,
pure submodules are summands. As in Proposition \ref{hcd}, one can prove
that $p$-adic algebraically compact modules of finite rank are perspective.
Then using Proposition \ref{fu-prod}, the proof is complete.
\end{proof}

\begin{example}
Let $G=G_{1}\oplus \dots \oplus G_{n}$, where $G_{i}$, $1\leq i\leq n$ are
perspective groups and $\mathrm{Hom}(G_{i},G_{j})=0$ for $i=2,\dots ,n$ and $%
1\leq j\leq i-1$. Then $G$ is perspective.
\end{example}

\begin{proof}
The proof goes by induction on $n$. If $G_{2}\oplus \dots \oplus G_{n}$ is
perspective then since it is fully invariant in $G$, it is perspective by
Corollary \ref{5.10}.
\end{proof}

Next, we give some examples and nonexamples.

Recall that a torsion-free group $G$ is called \textsl{cohesive} if $G/H$ is
divisible for all pure subgroups $H\neq 0$ of $G$. For some facts about such
groups see \cite{fuc2}, \textbf{88}, exercise 17.

\begin{example}
A perspective direct sum of pure subgroups of a perspective group.

\normalfont By Proposition \ref{tfac}, the group $G=\prod_{p\in \mathbb{P}}%
\mathbb{\hat{Z}}_{p}$ is perspective. Let $A_{i}$, $i=1,\dots ,n$, be pure
subgroups of $G$ such that $p^{\omega }A_{i}=0$ for all $p\in \mathbb{P}$
and $r(A_{i})=m_{i}$, where $1\leq m_{1}<\dots <m_{n}\leq 2^{\aleph _{0}}$.
Then $A_{i}$ are cohesive groups~(see \cite{KMT}, \textbf{32}). If $%
A=A_{1}\oplus \dots \oplus A_{n}$, then $A$ is perspective.

Indeed, each $A_{i}$ is perspective as indecomposable group and $%
\bigoplus_{j=i}^{n}A_{j}$ is fully invariant in $A$ for all $i=2,\dots ,n-1$%
. This follows from the fact that $\mathrm{Hom}(A_{j},A_{i})=0$ for every $%
i<j$, owing that since $r(A_{i})<r(A_{j})$, each homomorphism $%
f:A_{j}\rightarrow A_{i}$ has a non-zero kernel and so $f(A_{j})$ is
divisible, whence $f=0$.
\end{example}

In general we can ask

\textbf{Question.} Which \emph{pure subgroups} of a perspective group are
perspective ?

\begin{example}
A subgroup of a perspective group which is not perspective.

\normalfont Let $G$ be the torsion-free group of rank $n\geq 3$ as in~\cite%
{fuc2}, \textbf{88}, exercise 8. Then $G$ is perspective as an
indecomposable group but all the subgroups in $G$ of rank $n-1$ are free. So
the proper subgroups of rank $\geq 2$ of $G$ are not perspective.
\end{example}

\begin{example}
A factor group of perspective groups which is not perspective.

\normalfont Let $G$ be a torsion-free cohesive group with $r(G)\geq \aleph
_{0}$ and let $H$ be a pure subgroup of $G$ such that $r(G/H)\geq \aleph
_{0} $ (e.g., a pure subgroup $H$ of rank 1). Then $G/H$ is not perspective
being divisible torsion-free group of infinite rank, but $G$ and $H$ are
perspective being indecomposable groups.
\end{example}

\begin{example}
A factor group of a not perspective group may be perspective.

\normalfont Let $X=\{a_{n}:n\in \mathbb{N}^{\ast }\}$ and let $%
G=\left\langle X\right\rangle $ be free of countable rank. Consider the
function $f:X\rightarrow \mathbb{Q}$, $f(a_{n})=\frac{1}{n!}$ for every $%
n\in \mathbb{N}^{\ast }$. The group $G$ being free, $f$ extends to a group
homomorphism $\overline{f}:G\rightarrow \mathbb{Q}$, obviously surjective
(as $\mathbb{Q}=\left\langle \frac{1}{n!}:n\in \mathbb{N}^{\ast
}\right\rangle $). So $G$ is not perspective but $\mathbb{Q}\cong G/\ker (%
\overline{f})$ is perspective.
\end{example}

As in the module case, conditions for a perspective group $G$ which assure
that $G\oplus G$ is also perspective, are of interest (and difficult to
find).

In closing, we address this problem for rank 1 torsion-free groups, which
are perspective as indecomposable groups.

First, a simple lemma which is well known

\begin{lemma}
\label{nG}Each subgroup $A$ of torsion-free group $G$ of rank $1$,
isomorphic to the whole group, is of form $nG$, for some integer $n\geq 1$.
\end{lemma}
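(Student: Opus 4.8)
The plan is to show that any subgroup $A \leq G$ with $A \cong G$ must equal $nG$ for some positive integer $n$. Since $G$ has rank $1$, it is (isomorphic to) a subgroup of $\mathbb{Q}$, so I may identify $G$ with a subgroup of $\mathbb{Q}$ containing $1$ without loss of generality. The key structural fact I would exploit is that any nonzero endomorphism of a rank $1$ torsion-free group is multiplication by a fixed rational number: concretely, an injective homomorphism $\varphi : G \to G$ is determined by $\varphi(1) = q \in \mathbb{Q}$, and then $\varphi(g) = qg$ for all $g \in G$.

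First I would take an isomorphism $\varphi : G \to A$, which I regard as an injective endomorphism of $G$ with image $A$. Writing $q = \varphi(1)$, the image is $A = \varphi(G) = qG$. The remaining task is to argue that $q$ can be taken to be a positive integer $n$, i.e.\ that $qG = nG$. I would write $q = r/s$ in lowest terms with $s > 0$. The subtle point is to rule out a nontrivial denominator $s$: because $\varphi$ maps $G$ \emph{into} $G$, we need $qg \in G$ for every $g \in G$, and in particular multiplication by $q$ must carry $G$ back inside itself. I expect the denominator to force a contradiction or to be absorbable, leaving $qG = |r|\cdot G$ after replacing $q$ by $-q$ if necessary (which does not change the image), so that $n = |r|$ works.

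The main obstacle will be handling the denominator $s$ cleanly: I must show that if $r/s \in \mathbb{Q}$ (in lowest terms) satisfies $(r/s)G \subseteq G$, then in fact the image $(r/s)G$ coincides with $nG$ for the integer $n = r$, i.e.\ that dividing by $s$ does not enlarge the type beyond what $G$ already contains. Here I would use that $A \cong G$ forces $A$ and $G$ to have the same type, so passing from $G$ to $qG$ cannot introduce new $p$-divisibility; this constrains $s$ so that $G$ is already $s$-divisible on the relevant primes, giving $(r/s)G = rG = nG$ with $n = |r|$. Once the type-preservation is invoked, the identification $A = nG$ is immediate, and the requirement $n \geq 1$ follows by replacing $\varphi$ with $-\varphi$ if needed. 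The whole argument is short; the only care needed is the reduction to $G \leq \mathbb{Q}$ and the bookkeeping on the rational scalar $q$.
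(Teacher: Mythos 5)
Your proposal takes essentially the same route as the paper, whose one-line proof observes that an isomorphism $f:G\rightarrow A$ acts as multiplication by a rational number $n/m$ with $mG=G$, whence $A=f(G)=nG$. Your handling of the denominator is correct in substance, though the appeal to types is an unnecessary detour: from $(r/s)G\subseteq G$ with $\gcd(r,s)=1$, choosing integers $u,v$ with $ur+vs=1$ gives $g/s=u\,(r/s)g+vg\in G$ for every $g\in G$, so $G$ is $s$-divisible and $(r/s)G=rG$ immediately.
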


\begin{proof}
If $f:G\rightarrow A$ is an isomorphism then $f$ acts as multiplication with
some rational number $n/m$, where $mG=G$, and so $f(G)=nG=A$.
\end{proof}

Next, a characterization.

\begin{proposition}
\label{G+G}Let $G$ be a torsion-free group of rank $1$. Then $G\oplus G$ is
perspective iff for all coprime integers $m,n\geq 1$ and all integers $%
k,t\geq 0$, such that

(i) at least one of $k,t$ is non-zero, and

(ii) if one of $k,t$ is zero then the other is equal to $1$, and

(iii) if both $k,t$ are non-zero these are coprime,

in each of the following cases:

1) $mG\neq G$, $k=1$, $t=0$,

2) $nG\neq G$, $k=0$, $t=1$,

3) $mG\neq G$, $tG\neq G$, where $t\neq 0$,

4) $mG,nG\neq G,$

there exist coprime integers $s$, $l$ with $(ml-sn)G=G$, $(kl-st)G=G$.
\end{proposition}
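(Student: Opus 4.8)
The plan is to convert perspectivity of $G\oplus G$ into one arithmetic lemma about rank $1$ summands and then match the four cases to it. Write $R=\mathrm{End}(G)\subseteq\mathbb{Q}$, and recall that for an integer $d$ one has $dG=G$ precisely when $d\in U(R)$. Since $G\oplus G$ has rank $2$, the only summands that matter are the rank $1$ ones, so the first step is to describe them. Fixing $G\oplus G=G_{1}\oplus G_{2}$, I would show that a subgroup is a rank $1$ summand iff it has the form $G(u,v):=\{g(u,v)\mid g\in G\}$ for a primitive (coprime) integer pair $(u,v)$: using $\gcd(u,v)=1$ one gets $G(u,v)=(G\oplus G)\cap\mathbb{Q}(u,v)$, so $G(u,v)$ is pure, and conversely every rank $1$ summand, being pure, is the intersection of $G\oplus G$ with its rational span, i.e.\ such a $G(u,v)$. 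Moreover $g\mapsto g(u,v)$ gives $G(u,v)\cong G$, so \emph{all} rank $1$ summands are isomorphic, and perspectivity of $G\oplus G$ is exactly the assertion that any two of them admit a common complement.

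The crux is the following computation, which I would isolate as a lemma: for primitive pairs, $G\oplus G=G(k,t)\oplus G(s,l)$ iff $\Delta:=kl-st\neq0$ and $\Delta\in U(R)$, equivalently $(kl-st)G=G$. Indeed, solving $g(k,t)+h(s,l)=(x,y)$ by Cramer's rule forces $g=\tfrac{lx-sy}{\Delta}$ and $h=\tfrac{ky-tx}{\Delta}$, so the decomposition holds iff $\tfrac{k}{\Delta},\tfrac{t}{\Delta},\tfrac{s}{\Delta},\tfrac{l}{\Delta}\in R$; because $\gcd(k,t)=1$ this is equivalent to $\tfrac1\Delta\in R$, i.e.\ to $\Delta\in U(R)$. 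Combined with the description of summands, this yields the working criterion: writing $A=G(k,t)$, $C=G(m,n)$, a common complement of $A$ and $C$ is exactly a summand $G(s,l)$ with $(s,l)$ coprime such that $(kl-st)G=G$ and $(ml-sn)G=G$ (here $\det(A,U)=kl-st$ and $\det(C,U)=ml-sn$) --- precisely the conclusion sought.

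With this in hand both directions become bookkeeping. For necessity, given perspectivity and any data as in cases $1)$--$4)$, conditions (i)--(iii) say that $(k,t)$ is a primitive vector in the closed first quadrant and $m,n$ are coprime, so $A=G(k,t)$ and $C=G(m,n)$ are genuine isomorphic rank $1$ summands; a common complement $G(s,l)$ then exists and supplies the required coprime $s,l$. For sufficiency I would take two isomorphic rank $1$ summands $A,C$: if $A\cap C\neq0$ their pure hulls coincide, so $A=C$ and any complement works; hence assume $A\cap C=0$. Using sign changes, coordinate swaps and the unimodular automorphisms of $G\oplus G$ (all of which preserve the unit conditions, since $U(R)$ is closed under $\pm$ and under these substitutions) I would normalize the pair to $A=G(k,t)$, $C=G(m,n)$ with $(k,t)$ primitive in the first quadrant and $m,n\geq1$ coprime. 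If an ``easy'' complement works --- e.g.\ $(s,l)=(0,1)$, legitimate when $mG=G$, or $(s,l)=(1,0)$, legitimate when $nG=G$ or $tG=G$ --- there is nothing to prove; the surviving possibilities, in which these easy choices are blocked, are exactly the four listed cases together with their side conditions $mG\neq G$, $nG\neq G$, $tG\neq G$, and in each the hypothesis furnishes the needed $(s,l)$.

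The main obstacle is the sufficiency bookkeeping, not the lemma (which is routine once $\gcd(k,t)=1$ is used to pass from $\tfrac{k}{\Delta},\tfrac{t}{\Delta}\in R$ to $\tfrac1\Delta\in R$). The delicate point is to verify that normalization by automorphisms really carries an arbitrary pair $(A,C)$ with $A\cap C=0$ into one of the four configurations without disturbing the divisibility conditions, and that cases $1)$--$4)$ under constraints (i)--(iii) exhaust \emph{precisely} the pairs for which no easy common complement exists. In other words, the heart of the argument is matching the geometry of pairs of lines in $\mathbb{Q}^{2}$ to the arithmetic of $U(R)$, and checking the case list is both complete and non-redundant.
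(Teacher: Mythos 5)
Your framework is in substance the paper's --- coordinatize $G\oplus G$, describe rank $1$ summands by primitive integer pairs, and translate complementation into unit conditions on determinants --- but you organize it better in two places. First, your classification of rank $1$ summands (pure, hence of the form $G(u,v)=F\cap\mathbb{Q}(u,v)$ with $(u,v)$ primitive, the inclusion $F\cap\mathbb{Q}(u,v)\subseteq G(u,v)$ coming from $\alpha u+\beta v=1$) replaces the paper's identification of the complement $U$ via the two projections and Lemma \ref{nG}. Second, your Cramer lemma --- $G\oplus G=G(k,t)\oplus G(s,l)$ iff $(kl-st)G=G$ --- is exactly the determinant computation the paper redoes by hand in each case (e.g.\ $-s(ma+nb)+m(sa+lb)=(ml-sn)b$), stated once and for all; and its proof is correct, crucially because your $R=\mathrm{End}(G)$ is an honest subring of $\mathbb{Q}$ (not $G$ itself), so the steps ``$k/\Delta,t/\Delta\in R$ and $\gcd(k,t)=1$ give $1/\Delta\in R$'' and ``$1/\Delta\in R$ iff $\Delta\in U(R)$ iff $\Delta G=G$'' are legitimate. (One small omission: you should note that every primitive $G(u,v)$ really is a summand --- complete $(u,v)$ to an integer matrix of determinant $1$ and apply your lemma.) Granting the lemma, your necessity argument is complete and is cleaner than the paper's.

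The genuine gap is where you yourself locate it, and it is worse than you suggest: the claim that cases 1)--4) are ``exactly'' the configurations in which the easy complements are blocked is false as stated. Take $A=G(1,n)$ with $nG\neq G$ and $C=G(k,1)$ with $kG\neq G$, $k,n\geq 1$: neither $G(1,0)$ (its determinant against $A$ is $-n$) nor $G(0,1)$ (determinant against $C$ is $k$) is a common complement, yet none of 1)--4) applies to $(m,n,k,t)=(1,n,k,1)$, since all four presuppose $mG\neq G$ or $(k,t)\in\{(1,0),(0,1)\}$. This configuration is rescued only by swapping the roles of $A$ and $C$, after which case 3) applies; so the $A\leftrightarrow C$ and coordinate-swap reductions must be made explicit (this is what the paper's terse ``the case $C\cap B\neq 0$ is similar'' and its shuffling of subcases are doing). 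In fact your own toolkit closes the gap more efficiently than you propose: $\mathrm{SL}_2(\mathbb{Z})\subseteq\mathrm{Aut}(G\oplus G)$ acts transitively on primitive vectors, so you may normalize $C=G(0,1)$, then apply an integral shear fixing $(0,1)$ and a sign change to get $A=G(m,n)$ with $m,n\geq 1$ coprime; if $nG=G$ then $U=G(1,0)$ is a common complement (determinants $-n$ and $-1$), while if $nG\neq G$ then case 2) supplies $(s,l)$ and your lemma finishes --- so sufficiency needs case 2) alone, shorter than the paper's analysis (the other three cases are of course still needed for necessity). Note also that your parenthetical that unimodular automorphisms ``preserve the unit conditions'' is incorrect --- a shear changes $n$ to $cm+n$, which can change unit status --- but it is also unnecessary: automorphisms preserve the existence of common complements, and you only need the normalized pair to fall under the quantified hypothesis. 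As submitted, then, sufficiency is an unverified outline containing one inaccurate claim; carried out as above, your route is correct.
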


\begin{proof}
To show that the conditions are necessary, we present $G\oplus G$ as $%
F=Ra\oplus bR$, where $R$ is an additive subgroup with $1$ of $\mathbb{Q}$,
isomorphic to $G$. Since $m$, $n$ are coprime, the subgroup $R(ma+nb)$ is
pure, so its is a summand of $F$. Let $\pi :F\rightarrow Ra$ and $\theta
:F\rightarrow Rb$ be the projections and let $A$ and $C$ be isomorphic rank
1 summands of $F$. Hence $A=R(ma+nb)$ and $C=R(ka+tb)$ for some $m,n,k,t\in 
\mathbb{Z}$, where at least one of $\{m,n\}$ is $1$ and the corresponding
integer of $\{k,t\}$ is non-zero. It suffices to consider the case $m,n\neq
0 $. Since $A$ is a summand of $F$, $dG=G$ for $d=\gcd (m,n)$, so we can
consider $d=1$. Similarly, if $t=0$ and $C=Rka$ then $kG=G$, so we can
assume $k=1$.

1) Let $mG\neq G$. Assume that $F=R(ma+nb)\oplus U=Ra\oplus U$ for some $U$,
i.e. in this case $k=1$, $t=0$. Clearly $U\neq Ra,Rb$ and so $\pi (U),\theta
(U)\neq 0$, whence $\pi (U)=Rsa$, $\theta (U)=Rlb$ for some coprime integers 
$s,l\geq 1$ (by Lemma \ref{nG}) and so $U=R(sa+lb)$. We have $%
-s(ma+nb)+m(sa+lb)=(ml-sn)b$ and $-sa+(sa+lb)=lb$. Since the presentation of
elements is unique in direct sums and $Rb\leq F$, it follows that $%
(ml-sn)G=lG=G$.

2) Let $nG\neq G$. Assume that $F=R(ma+nb)\oplus U=Rb\oplus U$ for some $U$,
i.e. in this case $t=1$, $k=0$. Clearly $U\neq Ra,Rb$ and as above we can
show that $(ml-sn)G=sG=G$ for some $m,l\in \mathbb{Z}$ with coprime $s\,$, $%
l $.

3) Let $mG\neq G$, $tG\neq G$ for $t\neq 0$. Clearly $U\neq Ra,Rb$, so $%
U=R(sa+lb)$, with coprime $s$, $l$. We have $-s(ma+nb)+m(sa+lb)=(ml-sn)b$
and $-s(ka+tb)+k(sa+lb)=(kl-st)b$. Hence $(ml-sn)G=G$, $(kl-st)G=G$.

4) Let $mG,nG\neq G$. Then $U\neq Ra,Rb$, so, as in case 3), $(ml-sn)G=G$, $%
(kl-st)G=G$ for some coprime $s,l\in \mathbb{Z}$.

To show that the conditions are sufficient, let $F=A\oplus B=C\oplus K$, $%
A\cong C$ and $r(A)=1$. We consider several cases.

\textbf{I}. a) $A=Ra$, $C=Rb$. We can choose $U=R(a+b)$.

b) $A=R(m^{\prime }a+n^{\prime }b)$, where $m^{\prime }G=n^{\prime }G=G$. If 
$C=Rb$ we can take $U=Ra$, and if $C=Ra$ we can take $U=Rb$.

c) $A=R(m^{\prime }a+n^{\prime }b)$, $C=R(k^{\prime }a+t^{\prime }b)$, where 
$m^{\prime }G=n^{\prime }G=G$, $k^{\prime }G=t^{\prime }G=G$. Then we can
choose $U=Ra$ or $U=Rb$.

\textbf{II}. $A=R(ma+nb)$, where $mG\neq G$ or $nG\neq G$.

If $A=R(ma+nb)$, $C=R(ka+tb)$, $mG\neq G$, $nG=G$, $tG=G$ and $kG=G$ or $%
kG\neq G$, $k\neq 0$, then in both cases we can take $U=Ra$. Also $U=Ra$ if $%
C=Rb$.

1) $A=R(ma+nb)$, where $mG\neq G$ and $C=Ra$, i.e. $k=1$, $t=0$. Since $%
mG\neq G$ then $n\neq 0$ (otherwise $A$ is not summand of $F$). Then by
hypothesis there exist coprime $s,l\in \mathbb{Z}$ with $(ml-sn)G=G$, $lG=G$%
. If $U=R(sa+lb)$ then $-s(ma+nb)+m(sa+lb)=(ml-sn)b$, $-sa+(sa+lb)=lb$ and $%
-l(ma+nb)+n(sa+lb)=(ns-lm)a$, where $(ml-sn)G=G$ and $lG=G$, so $Ra,Rb\leq
A\oplus U,C\oplus U$ whence $F=A\oplus U=C\oplus U$.

2) $A=R(ma+nb)$, where $nG\neq G$, so $m\neq 0$, and $C=Rb$, i.e. $k=0$, $%
t=1 $. Let $s,l\in \mathbb{Z}$ be coprime, $(ml-sn)G=G$, $tG=G$ and $%
U=R(sa+lb)$. As in the previous case 1), $F=A\oplus U=C\oplus U$. The
remaining cases 3) and 4) are similar and since this way all the possible
cases are covered, the proof is complete.
\end{proof}

\begin{corollary}
Let $G$ be a torsion-free group of rank $1$ such that $G\oplus G$ is
perspective. Then $G$ is $p$-divisible at least for one prime number $p$.
\end{corollary}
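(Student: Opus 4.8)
The plan is to prove the contrapositive: assuming $G$ is not $p$-divisible for any prime $p$ (equivalently, $pG\neq G$ for every prime $p$), I will show that $G\oplus G$ is not perspective. The first step is to translate this divisibility hypothesis into a purely arithmetic statement. Since $G$ is torsion-free of rank $1$, for a nonzero integer $d=\prod_i p_i^{e_i}$ one has $dG=G$ if and only if $G$ is $p_i$-divisible for each $i$; hence, under the assumption that no prime divides $G$, the equality $dG=G$ holds exactly when $d=\pm1$ (while $0\cdot G=0\neq G$). In particular, for any integer $q$, the relation $qG=G$ forces $q=\pm1$.

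Next I would invoke the characterization in Proposition \ref{G+G}. If $G\oplus G$ were perspective, the conclusion of that proposition would hold for every admissible quadruple $(m,n,k,t)$, and I would apply it to the single choice $(m,n,k,t)=(2,5,1,0)$. This quadruple lies in case 1: indeed $m=2,n=5$ are coprime, $k=1,t=0$ satisfy conditions (i) and (ii), and $mG=2G\neq G$ by hypothesis. The proposition would then supply coprime integers $s,l$ with $lG=G$ and $(2l-5s)G=G$. By the previous paragraph, $lG=G$ forces $l=\pm1$, and $(2l-5s)G=G$ forces $2l-5s=\pm1$.

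The contradiction is then a one-line congruence: with $l=\pm1$ one has $2l-5s\equiv 2l\equiv \pm2\pmod 5$, whereas $\pm1\equiv 1,4\pmod 5$, and $2l-5s=0$ is impossible since $5\nmid 2l$. Thus $2l-5s$ is never $\pm1$, contradicting $(2l-5s)G=G$. Equivalently, this exhibits the isomorphic rank $1$ summands $R(2a+5b)$ and $Ra$ of $G\oplus G=Ra\oplus Rb$ (with $R\cong G$) as summands admitting no common complement. Hence $G\oplus G$ is not perspective, which establishes the contrapositive and therefore the corollary.

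I expect the only real decision point, rather than a genuine obstacle, to be the selection of a quadruple whose perspectivity conclusion provably fails. The governing requirement is $\gcd(m,n)=1$, $mG\neq G$, and $m\not\equiv\pm1\pmod n$, so that the line $lG=G$ (giving $l=\pm1$) is incompatible with $(ml-sn)G=G$ (giving $ml-sn=\pm1$); taking $m=2$, the smallest admissible $n$ is $5$, after which everything reduces to the elementary fact that $2l-5s\neq\pm1$ for $l=\pm1$. Note that the contradiction does not even use the coprimality of $s,l$, only the two divisibility conditions, so no further bookkeeping is needed.
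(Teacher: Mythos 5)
Your proposal is correct and takes essentially the same route as the paper's proof: both argue the contrapositive through case 1 of Proposition \ref{G+G} (with $k=1$, $t=0$), note that total non-divisibility forces $lG=G$ and $(ml-sn)G=G$ to mean $l=\pm 1$ and $ml-sn=\pm 1$, and then derive a contradiction from a choice of coprime $m,n$ with $n\nmid m\pm 1$. The only difference is cosmetic: you instantiate the concrete pair $(m,n)=(2,5)$ and spell out the congruence modulo $5$, whereas the paper leaves the choice of such $m,n$ generic.
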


\begin{proof}
As in Proposition \ref{G+G}, assume $(ml-sn)G=G$ and $(kl-st)G=G$. Moreover,
assume that $ml-sn=\pm 1$ and $kl-st=\pm 1$. Then if $t=0$ we have $l=\pm 1$
and so $\pm m\pm 1=sn$. Since we can choose coprime $m$ and $n$ such that $%
(\pm m\pm 1)/n\notin \mathbb{Z}$, it follows $G$ is not divisible only by $%
\pm 1$.
\end{proof}

The converse fails as shows the following

\begin{example}
If $G$ is a torsion-free group of rank~$1$, with $2G,5G\neq G$ and $G$ is
divisible only by~$11$, then $G\oplus G$ is not perspective.

\normalfont By contradiction, suppose $G\oplus G$ is perspective. Then
according to Proposition \ref{G+G}, $5l-2s=\pm 11^{a}$, $kl-st=\pm 11^{b}$,
for some integers $a,b\geq 0$. Taking $l=0$ we can suppose $t=1$ and so $%
s=\pm 11^{b}$ and $5l\pm 2\cdot 11^{b}=\pm 11^{a}$. Since $(l,11^{b})=1$ we
get $b=0$ or $a=0$. If $b=0$ then the equation $5l\pm 2=\pm 11^{a}$ has no
solutions since the last digit of the RHS 1 but is 2 or 7 in the LHS. If $a=0
$ then the equation $5l\pm 1=\pm 2\cdot 11^{b}$ has no solutions since the
last digit of the RHS is 2 but is 1 or 6 in the LHS.
\end{example}

A result of the same sort is the following

\begin{proposition}
If $G$ is a torsion-free homogeneous of rank $1$ group such that $G$ is
divisible for all prime numbers except two coprime numbers $p$ and $q$ then $%
G\oplus G$ is perspective.
\end{proposition}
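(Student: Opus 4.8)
The plan is to reduce everything to the arithmetic criterion of Proposition \ref{G+G}. First I would translate the divisibility hypothesis into a single clean condition: since $G$ is $r$-divisible for every prime $r\neq p,q$ while $pG\neq G$ and $qG\neq G$, for any integer $d$ one has $dG=G$ if and only if $\gcd(d,pq)=1$. Under this dictionary the requirement of Proposition \ref{G+G} becomes purely number-theoretic: for every admissible $m,n,k,t$ (coprime $m,n\geq 1$ and $k,t\geq 0$ subject to (i)--(iii)), I must produce coprime integers $s,l$ with both $ml-sn$ and $kl-st$ coprime to $pq$. This single statement then covers cases 1)--4) uniformly.

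Next I would read $ml-sn$ and $kl-st$ as the values of the two linear forms with coefficient vectors $(-n,m)$ and $(-t,k)$ evaluated at $(s,l)$. The decisive observation is that both forms are nonzero modulo $p$ and modulo $q$: the vector $(-n,m)$ cannot vanish modulo a prime because $\gcd(m,n)=1$, and $(-t,k)$ cannot vanish modulo a prime because conditions (i)--(iii) force $(k,t)\not\equiv(0,0)$ modulo any prime (if $t=0$ then $k=1$; if $k=0$ then $t=1$; and if both are nonzero they are coprime, hence not simultaneously divisible by a prime).

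Now I would argue existence prime by prime and glue. A nonzero linear form on $\mathbb{F}_p^2$ vanishes only on a line of $p$ points, so the two vanishing lines together omit at least $(p-1)^2\geq 1$ points; choose $(\bar s_p,\bar l_p)\in\mathbb{F}_p^2$ at which both forms are nonzero, and similarly $(\bar s_q,\bar l_q)\in\mathbb{F}_q^2$. Such good points are automatically distinct from $(0,0)$. By the Chinese Remainder Theorem pick $(\bar s,\bar l)\bmod pq$ reducing to these two good points; then $ml-sn$ and $kl-st$ are coprime to $pq$ for every integer lift $s\equiv\bar s$, $l\equiv\bar l\pmod{pq}$.

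Finally I would secure coprimality of $s$ and $l$. Because each good point is nonzero mod $p$ and mod $q$, neither $p$ nor $q$ divides both $\bar s$ and $\bar l$; fixing a nonzero representative $l\equiv\bar l\pmod{pq}$, any prime $r\in\{p,q\}$ dividing $l$ then fails to divide $\bar s$, and for the finitely many primes $r\nmid pq$ dividing $l$ one adjusts $s$ inside its class mod $pq$ (again by CRT) to avoid them, yielding $\gcd(s,l)=1$. These $s,l$ meet all the demands of Proposition \ref{G+G} in each case, so $G\oplus G$ is perspective. I expect the main obstacle to be precisely this last bookkeeping: verifying that the "good point" argument can be run simultaneously at $p$ and $q$ while still permitting a coprime integral lift $(s,l)$; once the two linear forms are seen to be nonvanishing modulo each prime, the remainder is a routine Chinese-Remainder computation.
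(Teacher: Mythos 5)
Your proof is correct, but it takes a genuinely different route from the paper's. The paper verifies the criterion of Proposition \ref{G+G} by exhaustive case analysis: it tabulates the twelve possible patterns of divisibility of $m,n,k,t$ by $p$ and $q$ and in each case exhibits an explicit common complement $U=R(sa+lb)$ (for instance $U=R(qa+pb)$ when $p\mid k$ and $q\mid t$), checking by hand that the resulting combinations such as $mp-qn$ and $kp-qt$ avoid both $p$ and $q$. You instead prove the uniform arithmetic statement underlying all twelve cases at once: after the dictionary $dG=G\Leftrightarrow \gcd(d,pq)=1$, the expressions $ml-sn$ and $kl-st$ are linear forms in $(s,l)$ whose coefficient vectors $(-n,m)$ and $(-t,k)$ are nonzero modulo each of $p$ and $q$ --- the first by $\gcd(m,n)=1$, the second because conditions (i)--(iii) of Proposition \ref{G+G} prevent $(k,t)$ from being simultaneously divisible by a prime --- so the two kernel lines in $\mathbb{F}_p^2$ cover at most $2p-1$ points, leaving at least $(p-1)^2\geq 1$ good points even for $p=2$; the Chinese Remainder Theorem then glues good residues at $p$ and $q$. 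Your final bookkeeping step is also sound: a prime $r\in\{p,q\}$ dividing $l$ cannot divide $\bar{s}$ because the chosen point is nonzero modulo $r$, and fixing a nonzero representative $l$ leaves only finitely many primes away from $pq$ at which $s$ must be adjusted within its class, so $\gcd(s,l)=1$ is attainable. What each approach buys: the paper's table produces concrete complements in every configuration, whereas your counting argument is shorter, immune to the risk of an omitted case, and visibly more general --- the same bound $(r-1)^2\geq 1$ works at every prime of any \emph{finite} exceptional set, so your argument actually proves that $G\oplus G$ is perspective whenever the rank $1$ group $G$ is divisible at all but finitely many primes, consistent with (and explaining) the paper's nonexample of a group divisible only at $11$, where the exceptional set is infinite and the dictionary step fails.
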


\begin{proof}
Let $F=G\oplus G$ and $F=A\oplus B=C\oplus K$, where $A\cong C$, $r(A)=1$.
As in the previous proposition \ref{G+G}, we take $A=R(ma+nb)$, $C=R(ka+tb)$%
, and in view of the sufficiency part,~we can suppose $m,n,k,t\neq 0$. We
are searching $U$ such that $U=R(sa+lb)$, where $(s,l)=1$. Consider all the
possible cases with respect to divisibility of $G$ by $m,n,k,t$ (in the next
table, the sign "+" means divisibility by the corresponding number, sign
"--" \ not divisibility).

{\tiny {\ 
\begin{tabular}[t]{|c|c|c|c|c|c|c|c|c|c|c|c|c|}
\hline
& 1 & 2 & 3 & 4 & 5 & 6 & 7 & 8 & 9 & 10 & 11 & 12 \\ \hline
$m$ & + & + & + & + & + & $-$ & $-$ & $-$ & $-$ & + & $-$ & $-$ \\ 
$n$ & + & + & + & + & $-$ & + & $-$ & $-$ & $-$ & $-$ & + & + \\ 
$k$ & + & + & $-$ & $-$ & + & + & $-$ & $-$ & + & + & + & $-$ \\ 
$t$ & + & $-$ & + & $-$ & + & + & $-$ & + & + & $-$ & $-$ & + \\ \hline
\end{tabular}
} }

As the cases $\{2,3,5,6\}$, $\{4,9\}$ and $\{10,12\}$ are respectively
similar, it suffices to check the cases 2, 4 and 10.

1)--2) $mG=nG=kG=G$ and $tG=G$ or $tG\neq G$. If $U=Rb$ then $%
F=R(ma+ng)\oplus U=R(ka+tb)\oplus U$.

4) $mG=nG=G$, and $kG,tG\neq G$. Since $\gcd (k,t)=1$, let $p\mid k$, $q\mid
t$ and $q\nmid k$, $p\nmid t$. If now $U=R(qa+pb)$, i.e. $s=q$, $l=p$, then $%
p,q\nmid (mp-qn)$ and $p,q\nmid (kp-qt)$, so $(mp-qn)G=G$, $(kp-qt)G=G$.
Consequently, by Proposition \ref{G+G}, $F=A\oplus U=C\oplus U$. If $p\mid t$%
, $q\mid k$, where $q\nmid t$, $p\nmid k$ and $U=R(pa+qb)$ then $p,q\nmid
mq-pn$, $p,q\nmid kq-pt$.

7) $mG,nG,kG,tG\neq G$.

a) $p\mid m,t$ and $q\mid n,k$. If $U=R(a+b)$ then $p,q\nmid m-n$ and $%
p,q\nmid k-t$.

b) $p\mid m,k$ and $q\mid n,t$. Let $U=R(qa+pb)$, then $p,q\nmid mp-qn$ and $%
p,q\nmid kp-qt$.

8) $p\mid m,k$ and $q\mid n$, $tG=G$.

a) $q\nmid k$. If $U=R(qa+b)$ then $p,q\nmid m-qn$ and $p,q\nmid k-qt$.

b) $q\mid k$. If $U=R(a+b)$, then $p,q\nmid m-n$ and $p,q\nmid k-t$.

10) $mG,kG=G$ and $nG,tG\neq G$. If $U=Rb$ then $F=A\oplus U=C\oplus U$.

11) a) $q\mid m,t$ and $p\nmid m,t$, but $nG=kG=G$. Let $U=R(pa+b)$, i.e. $%
s=p$, $l=1$. Then $p,q\nmid m-pn$ and $p,q\nmid k-pt$.

b) $p,q\mid m,t$. If $U=R(a+b)$ then $p,q\nmid m-n$, and $p,q\nmid k-t$.

c) $p\nmid m$, $q\mid m$ and $q\nmid t$, $p\mid t$. If $U=R(pa+qb)$, i.e. $%
s=p$, $l=q$ then $p,q\nmid mq-pn$ and $p,q\nmid kq-pt$.
\end{proof}

\begin{acknowledgement}
\normalfont The second author was supported by the Ministry of Science and Higher
Education of Russia (agreement No. 075-02-2023-943
\end{acknowledgement}

\bigskip

\end{document}